\newtheorem{lemma}{Lemma}[section]
\newtheorem{proposition}{Proposition}[section]
\newtheorem{theorem}{Theorem}[section]
\newtheorem{corollary}{Corollary}[section]
\newtheorem{algo}{Algorithm}
\theoremstyle{definition}
\newtheorem{definition}{Definition}
\newtheorem{example}{Example}[section]
\begin{document}
\title{A Generalization of the Greene-Kleitman Duality Theorem}
\author{Frank Y. Lu}
\date{\today}
\address{Department of Mathematics, Princeton University, Princeton, NJ 08544, USA}
\email{lfrank2015y@gmail.com}

\maketitle
\textbf{Abstract:} In this paper, we describe and prove a generalization of both the classical Greene-Kleitman duality theorem for posets and the local version proved recently by Lewis-Lyu-Pylyavskyy-Sen in studying discrete solitons, using an approach more closely linked to the approach of the classical case. 

\section{Introduction}
The Greene-Kleitman duality theorem for finite posets, first described in Greene's paper, \cite{GREENE197669} (see also \cite{britz1999finite}, upon which the following exposition is loosely based) states the following result. Given a poset $P,$ let $A_k$ be the maximal possible sum of the lengths of $k$ disjoint increasing sequences of elements (chains), and $D_k$ is the maximal possible sum of the lengths of $k$ disjoint sequences of elements where no two elements are pairwise comparable (anti-chains). Then $A_k, D_k$ are conjugate in the following sense: $A_1 + (A_2 - A_1) + \cdots$ and $D_1 + (D_2 - D_1) + \cdots$ form conjugate partitions of $n.$ See \cite{britz1999finite}[\S 8] for a description of one proof of this result, attributed to A. Frank, using a graph theoretic construction, which will be relevant to us. Here, we will refer to this as the classical Greene-Kleitman duality theorem.
\par The duality of these partitions lends itself to applications, which \cite{britz1999finite} discusses in detail. For instance, \cite{britz1999finite}[\S 3-4] goes into how one can interpret results about tableau associated with permutations through this lens of the duality. This is done by using the above theorem on the permutation poset, or the poset associated with a given permutation $\sigma$ of $n$ elements by imposing the ordering, on the set of elements $\{(i, \sigma(i))|i = 1, 2, \ldots, n\},$ that $(i, \sigma(i)) < (j, \sigma(j))$ if and only if $i < j$ and $\sigma(i) < \sigma(j).$
\par Recently, another related duality result was published in \cite{lewis2020scaling}, which \cite{Lewis2019} (from which the exposition regarding this theorem is based) calls the \textit{localized Greene's theorem}. Here, we start with a permutation $\sigma$ on a set of elements $\{1, 2, \ldots, n\},$ and consider the sequence $\sigma(1), \sigma(2), \ldots, \sigma(n).$ From there, the same sort of duality in the original duality theorem was shown: however, instead of the original values for the classical theorem being conjugate, we have quantities $A'_k$ and $D'_k$ being conjugate, which are defined as follows. For $A'_k,$ we consider, over all sets of $k$ disjoint subsequences, the maximum of the sum of the \textit{ascents} of each subsequence, where the ascent of a subsequence $s_1, s_2, \ldots s_m$ is the number of indices $i$ so that $s_i < s_{i+1},$ plus one (or $0$ if the sequence is empty). For $D'_k,$ this is defined to be the maximum, over all sets of $k$ consecutive subsequences, of the sum of the lengths of the maximal descending sequences of each subsequence. Note the consecutive condition here, in contrast with the classical theorem: for instance, if we have $2, 4, 3, 1$ as the sequence, we may not take the sequences to be $2, 1$ and $4, 3.$ The proof in \cite{lewis2020scaling} of this result differs substantially from the classical proof, utilizing the study of discrete solitons in the paper.
\par In this paper, we unite these two theorems with a generalization, Theorem 2.1, which we detail in the next section, using an overall structure of the proof similar to the proof provided by Frank. Here, we translate the problem into a problem about direct graphs and flows on direct graphs; again, see \cite{britz1999finite}[\S 8] for one version of Frank's proof, for instance, upon which the main ideas for this proof are built. However, the structure of the graph is constructed in a way where flows and potentials correspond more ``naturally" to sequences. 
\par Specifically, in Section 2, we introduce the generalized theorem and the specific information necessary. From there, in section 3 we set up the required graph theory to allow for the translation of the problem to this graph theoretic construction, along the lines of the classical proof. From here, we prove some basic properties of the graph construction which will be useful in Section 4, before proceeding on to the core part of the proof. In Section 5, we link the two desired poset-based quantities to the graph-theoretic construction and use this to arrive at an inequality, which we sharpen to the desired equality in Section 6. Finally, in Section 7, we show that both versions of the Greene-Kleitman duality theorem follow as corollaries of this general theorem, and provide another interesting special case.
\par Thanks to Dr. Pavlo Pylyavskyy for introducing me to this problem, as well as offering suggestions on drafts, including on the exposition in sections 1, 2, and 3, and the abstract. Thanks as well to Dr. Emily Gunawan for suggestions on the draft, especially with regards to the exposition of sections 2 and 3, including the example, and thanks to Dr. Joel Lewis for comments on the earlier version of the draft, in particular on the exposition in sections 2 and 3 as well.
\section{The Generalized Problem}
The exposition loosely adapts from \cite{Lewis2019} in generalizing this problem, in that the notation and exposition here generalizes that of the localized Greene's theorem given in \cite{Lewis2019}.
\par Given a poset $P$ on elements $S_P = \{e_1, e_2, \ldots, e_n\}$ and a bijection $h: S_P \rightarrow \{1, 2, \ldots, n\},$ we pick a set $C_P$ of pairs of distinct elements in $S_P$ with the following properties:
\begin{enumerate}
    \item Given $x, y \in S_P,$ if $x < y$ and $h(x) < h(y),$ then $(x, y) \in C_P.$
    \item Given that $(x, y) \in C_P,$ we have that $h(x) < h(y).$
    \item Given that $(x, y) \in C_P$ and $(y, z) \in C_P,$ we have that $(x, z) \in C_P.$
\end{enumerate}
\par In other words, $C_P$ is some binary transitive relation on $S_P$ that is a subset of the strict total ordering given by $h,$ which also contains the intersection of the relations given by $P$ and the relations given by $h.$
\par Given the set $C_P$ and bijection $h,$ we say that a sequence of distinct elements $s_1, s_2, \ldots, s_m$ is \textit{adjacentable} if for each $j, 1 \leq j \leq m-1,$  $(s_j, s_{j+1}) \in C_P.$ In addition, we say that the sequence is $h-$\textit{ordered} if it satisfies that $h(s_j) < h(s_{j+1})$ for each $j.$ Note that adjacentable sequences are necessarily $h-$ordered, but the reverse isn't true if $C_P$ is a strictly smaller relation than $h.$
\par Now, let $S$ be an adjacentable sequence of distinct elements $s_1, s_2, \ldots, s_m.$ Define $asc(S)$ to be the number of indices $j$ so that $s_j < s_{j+1},$ plus one, or to equal $0$ if the sequence is empty. 
\par In addition, for any $h-$ordered sequence $S$ of distinct elements, define $desc(S)$ to be the length of the longest subsequence of $S,$ say $s_1, s_2, \ldots, s_n,$ so that $s_i \not < s_j$ for each $i < j.$ 
\begin{example} Suppose we have a poset $P$ on the set $\{a, b, c, d, e\},$ with the cover relations $a<b, b<d, c<d, d<e,$ and the function $h$ that takes on the following values:
\begin{align*}
    h(a) = 1 \\ h(b) = 3 \\ h(c) = 5 \\ h(d) = 4 \\ h(e) = 2.
\end{align*} Let $C_P$ be the set $\{(x, y) \in \{a, b, d, e\} \times \{a, b, d, e\}| h(x) < h(y)\} \cup \{(a, c)\}$ If we have the sequence $S$ be $(a, e, b, d),$ we have $asc(S) = 3,$ as $a<e$ and $b<d.$ Also $desc(S) = 2,$ by taking the subsequence $e, d.$ Note that this will naturally be $0$ if $S$ is empty.
\end{example}
\par We say that two disjoint $h-$ordered sequences $s_1, s_2, \ldots, s_m$ and $t_1, t_2, \ldots, t_l$ of $P$ are \textit{semi-overlapping} if and only if there exist indices $i, j, k, l$ so that $(t_j, s_i)$ and $(s_k, t_l)$ lie in $C_P.$ For instance, note that $a, e, d$ and $b, c$ are semi-overlapping (since $f(d) > f(b), f(a) < f(b)$), but $a, e$ and $b, c$ aren't. 
\par From here, define $A_k'$ to be the maximum value, over all sets of $k$ disjoint adjacentable sequences $\{S_1, S_2, \ldots, S_k\}$ of $asc(S_1) + asc(S_2) + \cdots + asc(S_k).$ Similarly, define $D_k'$ to be the maximum value, over all sets $\{S_1, S_2, \ldots, S_k\},$ of $k$ disjoint $h-$ordered sequences where no two are semi-overlapping, of $desc(S_1) + desc(S_2) + \cdots + desc(S_k).$ 
\par For Example 2.1, we compute that $A_1' = 3,$ using the sequence $S = (a, e, b, d).$ Similarly, we see that $A_2' = 4,$ using $S_1 = (a, e, b, d)$ and $S_2 = (c),$ and $A_3' = 5,$ using $S_1 = (a, e),$ $S_2 = (b, d)$ and $S_3 = (c).$ We compute also that $D_1' = 3,$ using the sequence $(e, b, c),$ $D_2' = 4$ using the sequences $(e, b, c)$ and $(d),$ and $D_3' = 5$ using the sequences $(a), (e, b, c),$ and $(d).$
\par Given these quantities, we have the following theorem.
\begin{theorem}
Let $\lambda_1 = A_1',$ and $\mu_1 = D_1',$ and for $k \geq 2,$ let $\lambda_k = A_k' - A_{k-1}', \mu_k = D_k' - D_{k-1}'.$ Then, the sums $n = \lambda_1 + \lambda_2 + \cdots$ and $n = \mu_1 + \mu_2 + \cdots$ are partitions; moreover, they are conjugate partitions. 
\end{theorem}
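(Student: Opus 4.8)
The plan is to adapt A.\ Frank's network-flow proof of the classical theorem to this setting, building a directed graph out of $(P, h, C_P)$ in which flows encode families of adjacentable sequences and node potentials encode families of non-semi-overlapping $h$-ordered sequences. First I would construct a directed acyclic graph $G$ with a source $\sigma$ and a sink $\tau$: split each $e \in S_P$ into vertices $e^-, e^+$ joined by an arc $e^- \to e^+$ of capacity $1$; add arcs $\sigma \to e^-$ and $e^+ \to \tau$ for every $e$; and add an arc $e^+ \to f^-$ whenever $(e,f) \in C_P$. Since the arcs of the last type strictly increase $h$, the graph is acyclic. Give weight $1$ to each arc $\sigma \to e^-$ and to each ``ascent arc'' $e^+ \to f^-$ with $e < f$ in $P$, and weight $0$ to every other arc. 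Then a directed $\sigma$--$\tau$ path is exactly an adjacentable sequence and has total weight exactly $asc$ of that sequence; because $G$ is acyclic with integral capacities, an integral $\sigma$--$\tau$ flow of value $k$ decomposes into $k$ vertex-disjoint such paths, so $A'_k$ is exactly the maximum weight of an integral $\sigma$--$\tau$ flow of value $k$ in $G$.

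Granting this, the $\lambda$-half of the statement is the concavity of $k \mapsto A'_k$: the increment $A'_k - A'_{k-1}$ is the maximum weight of a weight-augmenting path in the residual network of an optimal flow of value $k-1$, and a standard exchange argument shows these increments are non-increasing (equivalently, the value function of a maximum-weight flow is concave). Hence $\lambda_1 \ge \lambda_2 \ge \cdots \ge 0$, so $\lambda$ is a partition; taking the $n$ singleton sequences shows $A'_k$ stabilizes at $n$ (and $A'_k \le n$ always, since $asc(S) \le |S|$ and the sequences are disjoint), so $|\lambda| = n$. Since the conjugate of a partition of $n$ is again a partition of $n$, it then suffices to prove the conjugacy $\mu = \lambda'$.

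For the $\mu$-half, the plan is to use linear-programming duality for the flow problem: the constraint matrix of the flow problem is totally unimodular, so the maximum-weight flow of value $j$ equals the minimum of an integral dual objective that is affine in $j$, and unwinding this yields, for all $j, k \ge 0$, an inequality of the form $A'_j + D'_k \le n + jk$. The real work of this half --- carried out in Sections 5 and 6 --- is the translation between feasible integral dual solutions (node potentials) and families of $k$ pairwise non-semi-overlapping $h$-ordered sequences: the point is that the ``level sets'' cut out by a potential are precisely sets carrying no relation $s < t$ with $h(s) < h(t)$, i.e.\ longest $desc$-subsequences, and that the failure of two sequences to be semi-overlapping is exactly the condition letting them be merged into one consistent potential (this is where transitivity of $C_P$ is needed). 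Granting the inequality, $D'_k \le \min_{j \ge 0}(n + jk - A'_j) = \sum_i \min(\lambda_i, k) = \lambda'_1 + \cdots + \lambda'_k$ for every $k$, which is one of the two containments defining conjugacy.

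The remaining step, and the one I expect to be the main obstacle, is to sharpen this to an equality: to produce, for each $k$, an explicit family of $k$ pairwise non-semi-overlapping $h$-ordered sequences whose $desc$-values sum to exactly $\sum_i \min(\lambda_i, k)$. The plan is to take an optimal maximum-weight flow of value $k$, extract the complementary optimal potential, and run the dictionary of the previous paragraph in reverse to read off the sequences; the delicate part is checking that at the optimum this dictionary really is a bijection --- in particular that the combinatorial definition of $desc$ as the longest ``weak-antichain'' subsequence matches the sizes of the potential level sets, and that the extracted sequences are genuinely pairwise non-semi-overlapping. Once this is established, $D'_k = \lambda'_1 + \cdots + \lambda'_k$ for all $k$, hence $\mu_k = \lambda'_k$, proving conjugacy; the two classical theorems then follow (Section 7) by specializing $C_P$ to the full $h$-order, respectively to the relation recording consecutivity.
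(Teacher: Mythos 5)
Your architecture is the one the paper uses (a split-vertex flow network in which an integral flow of value $k$ encodes $k$ disjoint adjacentable sequences, so that $A'_k$ is the optimal weight of a value-$k$ flow; this is Proposition 5.1 there), but as written your argument stops precisely at the two places where the real content lies. First, your route to the inequality $A'_j + D'_k \le n + jk$ runs weak LP duality through a claimed translation of an \emph{arbitrary} family of $k$ pairwise non-semi-overlapping $h$-ordered sequences into a feasible integral potential with the right objective value. You never construct that translation, and it is the delicate direction of the dictionary: $desc(S)$ counts only the longest subsequence of $S$ carrying no relation $s_i < s_j$ with $i<j$, so the elements of a single $S_i$ need not sit on one potential level, and the elements lying in no $S_i$ must also receive consistent values while every ascent arc and source arc stays covered. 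The paper never proves this converse; it proves only the potential-to-sequences direction (the level sets of the potential are pairwise non-semi-overlapping and each has $desc$ equal to its size, via Lemma 4.1), and it gets the upper bound instead by the direct combinatorial count of the intersections $a_i \cap d_j$ in Section 6, which is short and sidesteps the translation you are assuming.

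Second, you flag the equality step as the main obstacle and give only a plan for it, but that step is where the paper does its work: one must show that at optimality the potential level sets yield $p$ non-semi-overlapping sequences of total $desc \ge n + vp - A'_v$ (Proposition 5.2, resting on Lemmas 4.1 and 4.2, which control the potential along flow lines), and --- the part your sketch conflates --- that for \emph{every} $k$ there exists a flow value $j$ at which the optimal potential has sink value exactly $k$. Your phrase ``take an optimal maximum-weight flow of value $k$ and extract the complementary potential'' mixes up the flow value with the number of extracted sequences: the latter is the sink potential $p$, not $k$, so as stated the recipe does not produce $k$ sequences. The paper closes this by running a primal-dual algorithm and showing that $v$ sweeps through $0,\dots,n$ while $p$ decreases through all intermediate values, which simultaneously gives that $\lambda$ and $\mu$ are weakly decreasing (your appeal to concavity covers $\lambda$, but the partition property of $\mu$ and the tightness for all $k$ both need this attainment argument). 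Until you supply (i) the sequences-to-potential construction, or replace it by the intersection count, and (ii) the attainment argument making the bound tight for every $k$, the conjugacy is not proved. (A side remark: the classical Greene--Kleitman case is the specialization $C_P=\{(x,y)\mid x<y\}$ with $h$ a linear extension, not a ``consecutivity'' relation, though this concerns the corollaries rather than the theorem itself.)
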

\par For instance, as we'll show in Section 7, if we let $P$ be the natural ordering on the set of elements $\{1, 2, \ldots, n\},$ if we take $C_P$ to be the set $\{(x, y) \in S_P \times S_P|h(x) < h(y)\}$ and $h$ to be the permutation, we will arrive at the localized Greene's theorem for permutations from \cite{lewis2020scaling}. Also, if we let $P$ be a poset, $h$ to be a linear extension, and $C_P = \{(x, y)| x < y\},$ we will arrive at the Greene-Kleitman theorem from \cite{GREENE197669}. This latter result, however, will require a little bit more work, as we will do in Section 7.
\par As mentioned before, the general method of proof is similar to \cite{britz1999finite} \S 7 and \S 8, which was used to prove the classical Greene-Kleitman theorem. 
\section{Setup}
In this section, we establish a directed graph which reflects the structure of the poset $P.$ The exposition in this section follows \cite{britz1999finite} \S 7, with modifications to the theorems in the section, though we will also borrow some exposition from \cite{williamson_2019} when needed for modifications.
\subsection{The Graph}
\par Given a poset $P$ on set $S_P$ with $n$ elements, and bijection $h$ between elements of $P$ and the set $\{1, 2, \ldots, n\},$ we now construct a directed graph $G_{P, h, C_P} = (V, E).$ Here, the set $V$ consists of $2n + 2$ elements: a source vertex $b_0,$ a sink vertex $t_{n+1},$ and for each element $e \in P,$ we have a ``top" vertex $t_{h(e)}$ and a ``bottom" vertex $b_{h(e)}.$ The set of edges $E$ is the union of the following four sets, where we have the ordered pair $(v, w)$ represent a directed edge from vertex $v$ to vertex $w:$
\begin{enumerate}
    \item The set $\{(b_0, t_i)| 1 \leq i \leq n\}$ of edges from $b_0$ to each of the vertices $t_i.$ 
    \item The set $\{(b_i, t_{n+1})| 1 \leq i \leq n\}$ from each of the $b_i$ to $t_{n+1}.$
    \item The set $\{(t_i, b_i)| 1 \leq i \leq n\}$ from each $t_i$ to its corresponding $b_i.$ 
    \item The set $\{(b_i, t_j)| (h^{-1}(i), h^{-1}(j)) \in C_P\}.$
\end{enumerate}
\par Notice that these four sets of edges are distinct. For Example 2.1, we get a graph like the following graph. 
\begin{figure}[h]
    \centering
    \includegraphics[scale = 1.2]{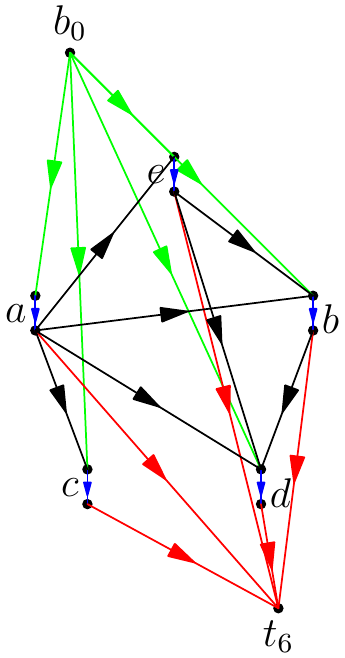}
    \caption{$G_{P, h, C_P}$ for Example 2.1}
    \label{fig:my_label}
\end{figure}
\par Here, green represents the first set, red represents the second set, blue represent the third, and black represent the fourth set. Next to each pair of vertices $t_i, b_i$ for $i$ from $1$ to $5$ is the element in $P$ that it corresponds to (namely, $h^{-1}(i)$).
\subsection{Minimal-Cost Flow}
We now consider imposing a flow onto the graph, and finding, for a given flow value $v$ (defined as in \cite{britz1999finite} as the sum of the flows assigned to each edges going out of a source node), the minimal cost flow. As mentioned before, the exposition we use here is similar to that of \cite{britz1999finite} \S 7, with some adaptations from \cite{williamson_2019}.
\par We use the definition of flow used in \cite{britz1999finite} \S 7: a \textit{flow} on a directed graph with vertex set $V$ and edge set $E,$ with one source node and one sink node, is a function $f: E \rightarrow \mathbb{R}_{\geq 0}$ so that, for each vertex $v$ that isn't a source or a sink, $\sum\limits_{(w, v) \in E} f((w, v)) = \sum\limits_{(v, w) \in E} f((v,w)).$ This property is also known as flow conservation. The value of a flow is then just $\sum\limits_{(s, w) \in E} f((s, w)),$ where $s$ is the source node. Notice that this flow can be restricted in value; the \textit{capacity} of a given edge gives us the bounds for what values $f$ can take on the edge. For this discussion, as $f$ is nonnegative, we let the capacity function simply be the maximum value that $f$ can take on each edge.
\par Now, for the costs of this graph, define the function $c: E \rightarrow \mathbb{Z},$ so that an edge $e = (v, w) \in E$ has cost $-1$ if $w = t_{n+1},$ or $v = b_i, w = t_j,$ where $h^{-1}(i) < h^{-1}(j)$ and $i < j,$ and all other edges have cost $0.$ Define as well the capacity function $u: E \rightarrow \mathbb{Z}$ that sets the capacity of all edges to be $1.$
\par The exposition from here follows that of \cite{williamson_2019}, as \cite{britz1999finite} doesn't provide us with a sufficiently general context, though we will return to the mechanics of \cite{britz1999finite} afterwards.
\par Working more directly in the context of \cite{williamson_2019}, we have the following definition:
\begin{definition}[Definition 5.2 from \cite{williamson_2019}]
Given a directed graph with vertices $V$ and edges $E,$ we add to the edges the reverse of these edges (so if $(v, w) \in E,$ we add $(w, v)$), and we denote the total set of edges as $E'.$ Suppose we are also given a function $u: E' \rightarrow \mathbb{Z}$ with $u(e) \geq 0$ for all $e \in E',$ and cost function $c: E' \rightarrow \mathbb{Z},$ where $c((v, w)) = -c((w, v)).$ Then, a circulation is a function $g: E' \rightarrow \mathbb{R}_{\geq 0}$ is a function satisfying the following properties: 
\begin{itemize}
    \item For all edges $e \in E',$ we have that $g(e) \leq u(e).$
    \item For all vertices $i \in V,$ we have that $\sum\limits_{k \in V|(i, k) \in E} g((i, k)) = 0.$ 
    \item For all vertices $v, w$ so that $(v, w) \in E',$ $g((v, w)) = -g((w, v)).$
\end{itemize}
We say that the cost of the circulation is $\frac{1}{2} \sum\limits_{e \in E} c(e)g(e),$ which we denote as $c(g).$
\end{definition}
\par We have the following theorem from \cite{williamson_2019} which corresponds to \cite{britz1999finite}[Theorem 7.1], giving us certain criteria for when we have the minimal cost flow. We weaken the theorem to only the needed conditions.
\begin{theorem}[part of Theorem $5.3$ from \cite{williamson_2019}]
The following are equivalent for a circulation $g,$ given capacity and cost functions $u, c$ respectively:
\begin{itemize}
    \item $g$ is a minimum-cost circulation.
    \item There exists a potential function $p: V \rightarrow \mathbb{R}$ so that for all vertices $v,w$ where $(v, w) \in E'$ and $u((v, w)) - g((v,w)) \geq 0,$ $c((v, w)) + p(v) - p(w) \geq 0.$
\end{itemize}
\end{theorem}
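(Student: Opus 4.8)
The plan is to prove the two implications separately through the classical ``no negative cycle in the residual graph'' criterion. Fix the circulation $g$, and call an edge $(v,w)\in E'$ \emph{residual} when it is unsaturated, $g((v,w))<u((v,w))$ (so it has positive residual capacity); these are exactly the edges along which the statement demands $c((v,w))+p(v)-p(w)\ge 0$. Write $G_g=(V,E_g)$ for the subgraph of residual edges. The argument then factors as: (i) $g$ is minimum-cost if and only if $G_g$ has no directed cycle of negative $c$-cost; and (ii) $G_g$ has no negative cycle if and only if a potential $p$ feasible on $G_g$ exists. Chaining these gives the claimed equivalence.

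For the direction ``feasible potential $\Rightarrow$ minimum-cost'', let $g'$ be any competing circulation and set $d=g'-g$. Then $d$ is again conservative and antisymmetric, $d((v,w))=-d((w,v))$, and wherever $d((v,w))>0$ the bound $g'((v,w))\le u((v,w))$ forces $g((v,w))<u((v,w))$, hence $(v,w)\in E_g$. By the standard flow-decomposition lemma, $d$ can be written as a nonnegative combination $\sum_i\alpha_i\chi_{\Gamma_i}$ of simple directed cycles $\Gamma_i$, each $\Gamma_i$ lying inside $E_g$. Summing the inequality $c((v,w))+p(v)-p(w)\ge 0$ around any $\Gamma_i$ makes the potential terms telescope to $0$, leaving $c(\Gamma_i)\ge 0$. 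Since the cost is linear in the flow, $c(g')-c(g)=c(d)=\sum_i\alpha_i\,c(\Gamma_i)\ge 0$, so $c(g')\ge c(g)$; as $g'$ was arbitrary, $g$ is minimum-cost.

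For the converse, suppose $g$ is minimum-cost. If $G_g$ contained a directed cycle $\Gamma$ with $c(\Gamma)<0$, then for all small enough $\varepsilon>0$ the function $g+\varepsilon\chi_\Gamma$, extended antisymmetrically over the reverse edges, is again a circulation---every edge of $\Gamma$ has residual slack, so the capacity constraints on $\Gamma$ and on its reverse edges still hold---and it has cost $c(g)+\varepsilon\,c(\Gamma)<c(g)$, contradicting optimality. Hence $G_g$ has no negative cycle. I then produce $p$ by a shortest-path construction: adjoin an auxiliary vertex $r$ with a zero-cost edge $r\to v$ for every $v\in V$, and let $p(v)$ be the minimum $c$-weight of a directed $r\to v$ path in this augmented residual graph. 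This is finite and well-defined precisely because there is no negative cycle (and no cycle passes through $r$), and the shortest-path relaxation inequality $p(w)\le p(v)+c((v,w))$ for every residual edge $(v,w)$ is exactly $c((v,w))+p(v)-p(w)\ge 0$.

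The only genuinely delicate part is the bookkeeping forced by the reverse edges and the half-weighted cost convention of the preceding definition: one has to check that $d=g'-g$ is honestly conservative and antisymmetric, that pushing $\varepsilon$ units around a negative residual cycle respects the capacity bound on \emph{both} an edge and its reverse, and that the flow-decomposition lemma is applied in the doubled edge set $E'$ rather than in $E$. None of this is deep, but it is where a slip would go unnoticed; once it is pinned down, the telescoping computation and the shortest-path argument are immediate, and (i) together with (ii) yields the stated equivalence. (One should also note that a minimum-cost circulation exists at all, since $g\equiv 0$ is feasible and the capacity bounds make the feasible region compact.)
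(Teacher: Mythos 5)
Your argument is correct, but note that the paper never proves this statement at all: it is imported verbatim (in weakened form) from the cited reference \cite{williamson_2019}, so there is no in-paper proof to compare against. What you have written is essentially the standard proof that the cited source itself uses: the equivalence is routed through the intermediate condition ``no negative-cost cycle in the residual graph,'' with the sufficiency direction handled by decomposing the difference $g'-g$ of two circulations into residual cycles and telescoping the potential terms, and the necessity direction handled by cancelling a hypothetical negative residual cycle and then building $p$ as shortest-path distances from an auxiliary root. Your bookkeeping remarks are exactly the right ones for the doubled edge set $E'$ and the $\tfrac{1}{2}\sum c(e)g(e)$ cost convention, and the observation that decreasing $g$ on a reverse edge can only help the constraint $g\le u$ disposes of the capacity check when pushing flow around a cycle.

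One point worth flagging: you silently corrected a defect in the statement as printed. The paper writes the condition as ``$u((v,w))-g((v,w))\ge 0$,'' which is satisfied by \emph{every} edge of $E'$, and with that literal reading the implication from minimality to the existence of a potential is false (it would force the reduced cost $c((v,w))+p(v)-p(w)$ to vanish on every edge, by antisymmetry of $c$ and of the condition on reverse edges). The intended hypothesis, as in the source, is strict positivity of the residual capacity, $g((v,w))<u((v,w))$, which is exactly how you interpreted it; your proof is correct for that intended statement.
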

\par Using this theorem, we prove that a strengthened version of \cite{britz1999finite}[Theorem 7.1] holds. Suppose that we have a flow $f$ and potential $p$ on $G_{P, h, C_P},$ with the cost function $c$ and capacity function $u,$ so that $f$ always lies between $0$ and $u$ for each edge in $E.$ We prove the following theorem.
\begin{theorem}[Modified Theorem 7.1 from \cite{britz1999finite}]
Let $G$ be a directed graph, with set of vertices $V$ and set of edges $E,$ with a single source and a single sink vertex. If we have a flow $f$ and potential $p$ so that $$p(w) - p(v) < c((v, w)) \implies f((v, w)) = 0,$$ and $$p(w) - p(v) > c((v, w)) \implies f((v, w)) = u((v, w))$$ for any $(v, w) \in E,$ then $f$ has minimal cost over all flows of the same value; that is, the sum of $f(e)c(e)$ over all edges $e$ is minimal for this flow.
\end{theorem}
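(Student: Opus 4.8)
The plan is to deduce this from the circulation optimality criterion (Theorem 3.1) by the standard device of closing the flow up into a circulation. Concretely, I would form an auxiliary graph $G'$ from $G$ by adjoining a single new arc $a = (t,s)$ from the sink $t$ back to the source $s$, and write $E''$ for $E \cup \{a\}$ together with all of its reverse arcs, so that $E''$ plays the role of the edge set ``$E'$'' in Theorem 3.1. I extend the data as follows: the cost $c$ is kept on $E$ and extended antisymmetrically to reverses, with $c(a) := -K$ for a positive integer $K$ to be chosen; the capacity $u$ is kept on $E$, set to $0$ on every reverse of an arc of $E$ (so that an admissible circulation is nonnegative, hence a genuine flow, on $E$) and on the reverse arc $(s,t)$, and set to $u(a) := v$, where $v$ is the value of $f$; the potential is $p$ itself, since no new vertices are introduced.

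Next I would check that the function $g$ defined by $g|_E := f$, $g(a) := v$, and extended antisymmetrically, is a circulation on $G'$: flow conservation at each original vertex is inherited from $f$, while at $s$ and at $t$ it holds precisely because $g(a) = v$ equals the net flow out of the source and into the sink; the capacity bounds hold by construction, with $a$ saturated. It then remains to verify the potential condition of Theorem 3.1 for $g$, namely $c(e) + p(x) - p(y) \ge 0$ for every arc $e = (x,y) \in E''$ with positive residual capacity $u(e) - g(e) > 0$. For an unsaturated original arc $(v,w) \in E$ this reads $p(w) - p(v) \le c((v,w))$, which is exactly the contrapositive of the second hypothesis on $f$; for the reverse of an arc $(v,w) \in E$ that carries positive flow it reads $p(w) - p(v) \ge c((v,w))$, the contrapositive of the first hypothesis; the arc $a$ is saturated and so imposes nothing; and the only arc left is $(s,t)$, whose condition is $-c(a) + p(s) - p(t) \ge 0$, i.e. $K \ge p(t) - p(s)$ --- so I take $K$ to be any integer with $K \ge p(t) - p(s)$. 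Theorem 3.1 then yields that $g$ is a minimum-cost circulation on $G'$.

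Finally I would translate this back to flows. Any flow $f'$ on $G$ of value $v$ yields, by the same recipe, a circulation $g'$ on $G'$ with $g'(a) = v$, and one computes $c(g') = c(f') - Kv$: the arc/reverse pairs over $E$ contribute $2\,c(f')$ and the pair $\{a, (s,t)\}$ contributes $-2Kv$, while the factor $\frac{1}{2}$ in the cost of a circulation halves both. Likewise $c(g) = c(f) - Kv$. Minimality of $g$ now gives $c(g) \le c(g')$, hence $c(f) \le c(f')$, which is precisely the claim that $f$ has minimal cost among all flows of value $v$.

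The only genuinely non-formal point --- the main obstacle --- is arranging the closing arc correctly: since the reverse arc $(s,t)$ is unsaturated whenever $v > 0$, its potential inequality cannot be inherited from the hypotheses and must be forced by choosing the cost of the closing arc sufficiently negative, which is the role of $K$. The remainder is careful bookkeeping with the antisymmetric conventions (reverse arcs, signs of costs, the factor $\frac{1}{2}$) so that the cost of the constructed circulation is genuinely $c(f)$ up to the explicit constant $Kv$. (One could instead avoid Theorem 3.1 altogether and argue directly: for another value-$v$ flow $f'$, the difference $f' - f$ is a circulation on the bidirected graph, which decomposes into directed cycles; the two hypotheses, together with the telescoping of $p$ around each cycle, show that every cycle in the decomposition has nonnegative cost, whence $c(f') \ge c(f)$. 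The route through Theorem 3.1 is slightly shorter given the setup already in place.)
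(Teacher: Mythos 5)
Your proposal is correct, and it follows the same overall route as the paper: close the flow up into a circulation, invoke the quoted circulation-optimality criterion (Theorem 3.1) via the potential $p$, and translate the cost comparison back to flows of value $v$ --- with the two hypotheses on $f$ supplying exactly the residual-arc inequalities for forward and reverse arcs of $E$. The only real difference is how the closing structure is handled. The paper adjoins a new vertex $s$ with two zero-cost edges $(s,a)$ and $(b,s)$ of capacity $v$, extends the potential by $p(s)=p(a)$, and then declares the reverses of these new edges to have capacity $-v$, so that they are saturated and no potential condition needs to be checked on them. You instead adjoin a single return arc $(t,s)$ of capacity $v$ and cost $-K$, leave its reverse with capacity $0$, and observe that the one condition not inherited from the hypotheses --- the unsaturated reverse arc $(s,t)$ --- is forced by choosing $K\ge p(t)-p(s)$; the constant $-Kv$ then cancels in the comparison since every value-$v$ flow produces a circulation sending exactly $v$ through the return arc. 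Your variant is, if anything, slightly cleaner on this point: it keeps all capacities nonnegative as Definition 1 requires and makes explicit the inequality that the paper sidesteps by assigning negative capacities to the reversed closing edges. Your parenthetical alternative (decompose $f'-f$ into cycles and use the telescoping of $p$ to show each cycle has nonnegative cost) is also valid and would bypass Theorem 3.1 entirely, but as you note it is not needed given the setup already in place.
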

\begin{proof}
\par Suppose that the flow $f$ satisfies these conditions, with value $v.$ We'll show that it is minimal by comparison with \cite{williamson_2019}[Theorem 5.3]. Denote the source node $a$ and the sink node $b.$
\par First, as in \cite{williamson_2019}[\S 5], given the graph $G = (V, E)$ and a desired flow value $v,$ add to $G$ the vertex $s,$ and two edges, one from $s$ to $a,$ and one from $b$ to $s,$ both with capacity $v$ and cost $0.$ Given $p$ as well, extend the potential function so that $p(s) = p(a)$ as well.
\par In addition, once we've added these two edges, perform the modifications in the beginning of definition $1.$ Specifically, let $E'$ be the new set of edges. Extend the capacity function $u$ to $u': E' \rightarrow \mathbb{R}$ that is $v$ on the edges $(s, a)$ and $(b, s),$ $-v$ on their reverses, and $0$ on all the other edges not in $E$. Furthermore, extend the cost function to equal $0$ on the new edges.
\par Now, \cite{williamson_2019} notes that given this flow, there is a corresponding circulation with the same cost. We show this more precisely.
\par To do this, given any flow $f',$ construct a function $g'$ where the following hold: 
\begin{center}
  $g((v, w)) = f((v,w))$ for all $(v, w)$ in $E.$
  \\ $g((v, w)) = -f((w, v))$ for all $(v, w)$ in $E' - E.$
  \\ $g((s, a)) = g((b, s)) = v.$
  \\ $g((a, s)) = g((s, b)) = -v.$
\end{center} It is not hard to check that this is a circulation. 
\par By construction, notice that the cost of $g'$ and the cost of $f'$ are the same. Notice that the two new edges and their respective ``reversed" edge have cost $0$ and so don't contribute to the total cost. Also, observe that for every edge $(v, w) \in E$ in the circulation, the contribution of the cost due to $(v, w)$ and $(w, v)$ in total is $\frac{1}{2} (c((v, w))f((v, w)) + c((w,v))f((w, v))) = c((v, w))f((v, w)),$ and summing these up yields the same cost.
\par Now, let $g$ be the circulation constructed from the particular flow $f$ mentioned at the beginning of the proof. Notice that for all $(v, w) \in E',$ we have three cases to consider.
\begin{enumerate}
    \item First, if $(v, w) \in E,$ by construction notice that if $u((v, w)) > f((v, w)),$ then by construction we see that $p(w) - p(v) \leq c((v, w)),$ or that $c((v, w)) - p(w) + p(v) \geq 0.$
    \item Next, if $(v, w)$ is so that $(w, v) \in E,$ then notice that $u((w, v)) > f((w, v)) \iff f((v, w)) > 0,$ which in turn means that $p(w) - p(v) \geq c((v, w)),$ or that $c((w, v)) - p(v) + p(w) \geq 0.$
    \item For the last four edges, notice that their circulation equals their capacity, so there's nothing that needs to be checked here.
\end{enumerate}
Then, it follows that $g$ is a minimal cost circulation, which means that the cost of $g$ is at most the cost of $g'.$ But then it follows that the cost of $f$ is at most the cost of $f',$ for any flow $f'$ with value $v,$ which gives us the desired.
\end{proof}
\par In particular, notice that the first condition in \cite{britz1999finite}, namely that the potential is bounded between its values at the source and sink nodes, is not necessary to maintain for minimality, thus allowing us more flexibility with the potential function. We are now able to return back to the notation of \cite{britz1999finite}, but now with the possibility of negative potentials and costs.
\subsection{Applying the Algorithm}
\par We now apply \cite{britz1999finite}[Algorithm 7.2] to the graph $G_{P, h, C_P},$ with the $2n + 2$ vertices $b_0, t_1, b_1, \ldots, t_{n+1},$ but with a few modifications (specifically to the initial conditions), which are produced below. Let $V$ be the set of vertices and $E$ the set of edges in this graph.
\begin{algo}[Modified Algorithm 7.2 from \cite{britz1999finite}]
The algorithm is as follows:
\begin{enumerate}
    \item To initialize the flow and potential, set $f$ to be so that $f(e) = 0$ for every edge $e \in E.$ We also declare that $p(b_i) = -i = p(t_i).$
    \item Let $G'$ be the modified graph with the same vertices and edges $\bar{E} = \{(v, w): (v, w) \in E, p(w) - p(v) = c((v, w)), f((v, w)) < u((v, w))\} \cup \{(w, v): (v, w) \in E, p(w) - p(v) = c((v, w)), f((v, w)) > 0\}.$ From here, let $X$ be the set of vertices $v$ where a path exists from the source $s$ to $v$ using the edges in $\bar{E}.$ If $t \in X,$ then go to step 3. Otherwise, go to step 4.
    \item There exists a path through vertices $s, v_1, v_2, \ldots, v_k, t,$ where all these vertices lie in $X$ and the edges are in $\bar{E}.$ Increase the flow of each edge along here by $1,$ then go to step 5.
    \item Otherwise, for every vertex not in $X,$ increase the potential of that vertex by $1.$ Go to step 5 next.
    \item If we have maximal flow, stop. Otherwise, go to step 2 again.
\end{enumerate} 
\end{algo}
\par \cite{britz1999finite}[Theorem 7.3] says that the above algorithm maintains a minimum cost flow for each flow value at each step, comparing with the conditions in \cite{britz1999finite}[Theorem 7.1]. We will explicitly prove that this theorem holds even if we strip the potential bounding condition, for the sake of completeness.
\begin{theorem}[Modified Theorem 7.3, \cite{britz1999finite}]
The above algorithm produces, for each flow value, a minimal-cost flow, as the two conditions described in Theorem 3.2 are preserved after each step.
\par Furthermore, the algorithm terminates when we reach a maximal flow value.
\end{theorem}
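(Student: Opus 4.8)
The plan is to prove both assertions by induction on the number of steps the algorithm has executed, carrying as the inductive invariant that the current pair $(f,p)$ satisfies the two implications of Theorem 3.2; given the invariant, minimality of $f$ for its flow value is immediate from Theorem 3.2, and the termination claim is a separate counting argument layered on top. For the base case I would check the initial data $f\equiv 0$ and $p(b_i)=p(t_i)=-i$ (so $p(b_0)=0$ and $p(t_{n+1})=-(n+1)$) directly: the first implication holds vacuously since $f$ is identically zero, and the second holds because in fact $p(w)-p(v)\le c((v,w))$ on every edge of $G_{P,h,C_P}$. That last inequality is a short case check over the four families of edges, using that $(x,y)\in C_P$ forces $h(x)<h(y)$ and that $c$ equals $-1$ exactly on the ``ascent'' edges $b_i\to t_j$ and on the edges into $t_{n+1}$, and $0$ otherwise.

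For the inductive step, suppose first the algorithm augments along a path $b_0=u_0,u_1,\dots,u_m=t_{n+1}$ through $\bar{E}$ (Step 3). Here $p$ is unchanged, so I would first argue that $f$ remains a legitimate flow whose value has gone up by one: $f$ is integer-valued throughout the run, a forward edge of $\bar{E}$ has $f<u=1$ (so $f=0$ before, $1$ after) while a reversed edge has $f>0$ on the underlying edge (so $1$ before, $0$ after), flow conservation at each interior $u_j$ is preserved because a simple path enters and leaves $u_j$ exactly once, and the first edge of the path is a genuine forward edge out of $b_0$ (which has no incoming edges), so the value increases by one. The invariant is then immediate: every edge of $\bar{E}$ satisfies $p(w)-p(v)=c((v,w))$ exactly, so on any edge whose flow changed neither implication's hypothesis is active.

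Now suppose instead the algorithm raises the potential of every vertex outside $X$ by $1$ (Step 4), so $f$ is unchanged and only $p$ changes to some $p'$. I would run through each edge $(v,w)\in E$ according to the location of its endpoints relative to $X$. If $v,w$ both lie in $X$ or both lie outside, then $p'(w)-p'(v)=p(w)-p(v)$ and there is nothing to check. If $v\in X$ and $w\notin X$, then $p'(w)-p'(v)=p(w)-p(v)+1$, and since $w\notin X$ the edge is not in $\bar{E}$: either it is saturated, in which case the invariant (first implication, contrapositive, using $u=1\ne 0$) forces $p(w)-p(v)\ge c((v,w))$, so the new difference strictly exceeds $c$ and only the second implication is active, which holds; or $f<u$ with $p(w)-p(v)\ne c((v,w))$, in which case the invariant (second implication, contrapositive) plus integrality give $p(w)-p(v)\le c((v,w))-1$, so the new difference is $\le c$ and only the first implication can fire, which again holds. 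The case $v\notin X$, $w\in X$ is handled the same way, this time through the reversed edge $(w,v)\notin\bar{E}$. What makes all these inequalities close up is precisely that costs and potentials are integers and every capacity equals $1$; this is also the step where the classical argument used the bound $p(s)\le p(v)\le p(t)$, which we no longer have, so I expect this case analysis to be the main obstacle.

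Finally, for termination: each Step 3 raises the flow value by one, and the value is bounded by $n$ (the total capacity of the edges leaving $b_0$), so there are at most $n$ augmentations, and Step 5 halts as soon as the maximum value is reached. Between consecutive augmentations only Step 4 runs; since $b_0\in X$ at every iteration, $p(b_0)$ is frozen, and since Step 4 occurs only when $t_{n+1}\notin X$, each execution of Step 4 increases $p(t_{n+1})$ by exactly $1$, so it is enough to bound $p(t_{n+1})-p(b_0)$ from above over the whole run. For that I would use the identity that augmenting along a path all of whose edges lie in $\bar{E}$ changes the total cost $\sum_e c(e)f(e)$ by exactly $p(t_{n+1})-p(b_0)$ (the edge contributions telescope, since $p(w)-p(v)=c((v,w))$ along the path). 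Because $p(t_{n+1})-p(b_0)$ only ever increases, these successive cost increments are nondecreasing, and because by the already-proved part the flow after $k$ augmentations is a minimum-cost flow of value $k$, the increments are the successive differences of a convex integer-valued function on $\{0,1,\dots,n\}$, hence bounded. This caps $p(t_{n+1})-p(b_0)$, so only finitely many Step 4 iterations can occur before the next augmentation (or before halting), and the algorithm terminates. The convexity input is exactly what replaces the ``potentials are bounded'' shortcut of the classical proof, and is the other place where some care is needed.
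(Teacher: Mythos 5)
Your first half---the inductive preservation of the two conditions of Theorem 3.2, hence minimality of the flow at every value---is correct and is essentially the paper's own argument: the base case amounts to checking $p(w)-p(v)\le c((v,w))$ on the four families of edges, Step 3 only alters flow on edges where $p(w)-p(v)=c((v,w))$, and Step 4 is handled by a case analysis across the cut defined by $X$, closed up by integrality of $p,c$ and the unit capacities. (The paper organizes the Step 4 cases by whether $p(w)-p(v)$ was $<$, $=$, or $>c((v,w))$ before the change rather than by saturation, but the content is the same.)

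The termination half, however, has a genuine gap. Writing $C(k)$ for the minimum cost of a flow of value $k$, your telescoping identity correctly shows that at the moment the $k$-th augmentation is performed, $p(t_{n+1})-p(b_0)=C(k)-C(k-1)$, and these increments are nondecreasing and bounded. But this only constrains the potential at instants when an augmentation actually occurs, whereas the scenario you must exclude is precisely that no further augmentation ever occurs: the flow value is still below $n$, $t_{n+1}$ never re-enters $X$, and Step 4 repeats forever, driving $p(t_{n+1})\to\infty$. In that scenario there is no ``next increment'' to anchor your cap, so the argument is circular---you use the occurrence of the next augmentation to bound the potential, and the bound on the potential to guarantee the next augmentation. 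What is needed, and what the paper supplies at the end of Section 4 via Lemmas 4.1 and 4.2, is a bound on $p(t_{n+1})-p(b_0)$ valid whenever the invariant holds and the flow value is less than $n$, independent of any future augmentation. One way to repair your route: if the value is below $n$, Ford--Fulkerson gives an augmenting path from $b_0$ to $t_{n+1}$ in the residual graph; along it, each unsaturated forward edge satisfies $p(w)-p(v)\le c((v,w))\le 0$ and each backward-traversed flowed edge satisfies $p(w)-p(v)\le -c((w,v))\le 1$, so telescoping bounds $p(t_{n+1})-p(b_0)$ by the number of arcs, at most $2n+1$; then your count of Step 4 executions goes through. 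The paper instead builds a short explicit chain of bounded potential differences (an unsaturated edge out of $b_0$, then flow conservation together with Lemmas 4.1 and 4.2 to reach $t_{n+1}$), but either repair fills the same hole.
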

\begin{proof}
We prove that the initial conditions have the desired properties in Theorem 3.2, and then that, after running through the algorithm, the desired properties hold, assuming that they held initially. This will prove the desired claim by induction, and hence Theorem 3.2. For ease of notation, let the index of $v$ be the value $i$ so that either $v = t_i$ or $v = b_i;$ initially, we see that the index of $v$ is just $-p(v)$ by construction.
\par First, for the initial conditions, notice that the flow everywhere is $0,$ by construction, so the first condition is vacuously true. As for the second, notice that $$p(w) - p(v) > c((v, w)) \implies f((v, w)) = u((v, w)),$$ means that the index of $w$ is smaller than that of $v.$ But then we have no edges from $w$ to $v,$ which means that this vacuously holds for all edges $(v, w) \in E.$
\par Now, for the algorithm. The only issues we need to check are for steps $3$ and $4.$ Suppose $G_{P, h, C_P}$ initially satisfied the conditions in Theorem 3.2. If we reach step 3, then by the algorithm we have a sequence of vertices $s, v_1, \ldots, v_k, t,$ where each consecutive pair of vertices in the sequence has an edge in $\bar{E},$ and we've increased the flow along these edges by $1.$ 
\par But notice that, by construction, the potentials between every pair of consecutive vertices equals the cost. This means that the conditions still hold, since the only pairs of vertices $(v, w)$ where the flow changes are those where $p(w) - p(v) = c((v, w)),$ so the conditions remain satisfied.
\par Now, suppose we reached step 4. Consider any edge $(v, w) \in E.$ If $p(w) - p(v) < c((v, w)),$ notice then that, since $p, c$ are always integers, $p(w) - p(v)$ remains at most $c((v, w)),$ and similarly for the $>$ symbol. The only thing we need to check is when $p(w) - p(v) = c((v, w))$ initially, and where exactly one of the potentials changes.
\par Suppose that $p(w)$ increases by $1.$ Then, it follows that $w$ is not in $X,$ but $v$ is in $X.$ But this means that, since we have a path from $s$ to $v$ along edges in $\bar{E},$ there is no edge between $v$ and $w$ in $\bar{E}.$ This means that, as $(v, w) \in E,$ we have that $f((v, w)) = u((v, w)),$ since the flow must remain at most the capacity. But then notice that this satisfies the condition.
\par Similarly, if $p(v)$ increases by $1,$ this means that $w \in X, v \not \in X.$ But again, this means that we have no edge from $w$ to $v.$ But this means that $f((v, w)) = 0,$ as $(v, w) \in E.$ This means that the condition is satisfied for that edge too. 
\par For maximality, we will prove this at the end of the next section. 
\end{proof}
\par This allows us to notice that, at every stage of the algorithm, even with a different potential function, we still output a minimal cost flow for a given flow value $v.$
\section{Basic Properties}
\par First, we prove some properties of the flow on $G_{P, h, C_P}$ in general, throughout the algorithm. We say that a vertex is ``reachable by $b_0$," or just ``reachable," if it lies in the set $X$ (as per the notation of \cite{britz1999finite}[\S 7], which we had for Algorithm 1). We first have the following lemma.
\par It's not hard to see that every vertex of the form $t_i$ or $b_i,$ where $1 \leq i \leq n,$ can have at most one edge with nonzero flow going in, and at most one edge with nonzero flow going out. To see this, notice that $t_i$ has only one edge that flows out, and $b_i$ has only one edge going into it, and all edges in this case have capacity $1.$ Since all flows are integral, by the algorithm, it follows that there can only be one edge for the other side that has nonzero flow. 
\par We now have two lemmas that we'd like to prove.
\begin{lemma}
For any edge from $b_i$ to $t_j,$ if there is a flow along that edge, then $p(t_j) - p(b_i)$ equals the cost of the edge.
\end{lemma}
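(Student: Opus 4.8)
The plan is to prove the asserted equality as an invariant preserved at every stage of Algorithm~1, arguing by induction on the steps of the algorithm; precisely, I will maintain the statement that at each stage, for every edge $(b_i,t_j)\in E$, $f((b_i,t_j))>0$ implies $p(t_j)-p(b_i)=c((b_i,t_j))$. Note that one inequality is automatic: by Theorem~3.3 the two conditions of Theorem~3.2 hold at every stage, and the first of them reads, contrapositively, $f((b_i,t_j))>0\Rightarrow p(t_j)-p(b_i)\ge c((b_i,t_j))$, so the real content is to exclude a strict inequality, which the invariant does. The base case is immediate: right after the initialization every edge carries zero flow, so the statement holds vacuously.

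For the inductive step I examine the two kinds of moves. A flow augmentation (step~3) pushes one unit along a path all of whose edges lie in $\bar E$; a forward edge of that path of the form $(b_i,t_j)$ automatically satisfies $p(t_j)-p(b_i)=c((b_i,t_j))$ by the definition of $\bar E$, and since step~3 leaves potentials alone the invariant survives on every edge whose flow increased, while a reverse edge of the path can only drop some $f((b_i,t_j))$ from $1$ to $0$ (capacities equal $1$ and the flow is integral), leaving nothing to check there; every other edge is untouched. Hence the only substantial case is step~4, which raises $p(v)$ by $1$ for every $v\notin X$: on a flow-carrying edge $(b_i,t_j)$ the quantity $p(t_j)-p(b_i)$ is unchanged by this move precisely when $b_i\in X\iff t_j\in X$, and establishing that biconditional is the heart of the matter.

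So fix an edge with $f((b_i,t_j))=1$ at the moment step~4 runs, say with $1\le i,j\le n$. By the observation preceding the lemma, $(b_i,t_j)$ is then the unique out-edge of $b_i$ carrying positive flow, and since the only edge of $E$ into $b_i$ is $(t_i,b_i)$, flow conservation forces $f((t_i,b_i))=1$ as well. Now inventory the edges of $\bar E$ pointing into $b_i$: the only edge of $E$ into $b_i$, namely $(t_i,b_i)$, is saturated and hence absent from $\bar E$; and a reverse edge into $b_i$ is the reverse of an out-edge of $b_i$, which can lie in $\bar E$ only if that out-edge carries positive flow, leaving $(t_j,b_i)$ as the sole candidate---and $(t_j,b_i)$ does lie in $\bar E$, because $f((b_i,t_j))>0$ and $p(t_j)-p(b_i)=c((b_i,t_j))$ by the inductive hypothesis. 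Thus $(t_j,b_i)$ is the one and only edge of $\bar E$ into $b_i$. Therefore any $\bar E$-path from $b_0$ to $b_i$ must enter $b_i$ along $(t_j,b_i)$ and so passes through $t_j$, giving $b_i\in X\Rightarrow t_j\in X$; conversely $(t_j,b_i)\in\bar E$ lets any $\bar E$-path reaching $t_j$ be extended to $b_i$, giving $t_j\in X\Rightarrow b_i\in X$. Hence $b_i\in X\iff t_j\in X$, step~4 moves $p(b_i)$ and $p(t_j)$ by the same amount, and the equality is preserved. The bottom-to-top edges incident to the sink obey this argument verbatim with $t_{n+1}$ in the role of $t_j$, and the edges incident to the source are handled by the analogous inspection of the $\bar E$-edges at $t_j$ together with the fact that $b_0\in X$ at every stage.

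The one spot where I expect real friction is this step-4 inventory---establishing that $(t_j,b_i)$ is the unique edge of $\bar E$ entering $b_i$ (and the mirror statement at $t_j$ for source-incident edges) is where flow conservation, integrality of the flow, and the ``at most one incident flow-edge per orientation'' observation must all be used together. Once the inventory is nailed down the biconditional $b_i\in X\iff t_j\in X$ is immediate and the induction closes; termination and the other basic properties are not needed here.
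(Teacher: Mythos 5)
Your induction is, in substance, the paper's own proof recast: the paper phrases it as a contradiction at the first failing step, but the two key facts are the same ones you use --- flow is only ever created along edges of $\bar E$, where the potential difference equals the cost by definition, and at a potential-raising step the unique $E$-edge $(t_i,b_i)$ into $b_i$ is saturated by flow conservation while the only admissible reverse edge into $b_i$ is $(t_j,b_i)$, so $b_i\in X\iff t_j\in X$ and the equality survives. This correctly covers the edges for which the paper actually invokes the lemma, namely $(b_i,t_j)$ with $1\le i\le n$ and $j$ possibly $n+1$ (note that Lemma 4.2 treats the case $k=0$ separately rather than citing this lemma).

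The one flaw is your closing sentence claiming that source-incident edges are handled by an ``analogous inspection at $t_j$'': that claim is false, and indeed the stated equality can genuinely fail on edges out of $b_0$. Once $(b_0,t_j)$ is saturated it is no longer a forward edge of $\bar E$, and nothing forces $t_j\in X$ at later potential updates (the reverse edge $(b_j,t_j)$ only helps if $b_j\in X$), so $p(t_j)$ can rise above $p(b_0)=0$ while the flow persists. Concretely, take $n=2$ with $C_P=\{(h^{-1}(1),h^{-1}(2))\}$: after one potential update the algorithm augments along $b_0,t_1,b_1,t_2,b_2,t_{n+1}$, and at the very next iteration $X=\{b_0\}$, so $p(t_1)$ rises to $1$ while $f((b_0,t_1))=1$ and $c((b_0,t_1))=0$. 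This does not damage your proof of the cases that matter (and the paper's own argument likewise only works for $i\ge 1$, since it relies on $b_i$ having a unique in-edge and on $b_i$ being entered along some $\bar E$-edge, which fails for the source); you should simply delete that sentence and state the lemma's scope as $1\le i\le n$.
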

\begin{proof}
Suppose for the sake of contradiction that this fails at some point during Algorithm 1. Consider the first step at which this fails, after making the change in flow or potential.
\par  Note that this can't be the first time that there is a flow between the two edges, since by construction we only add the flow if the cost equals the potential change. So this must mean that this occurs while potential drops; in other words, one of $b_i, t_j$ is reachable by $s$ along this new graph (in the sense that it lies in $X$) and the other isn't.
\par Suppose that $t_j$ is reachable by $b_0.$ Then, by construction, since before the change in potential the cost of flow along the edge equals the difference in potentials, we must have that $b_i$ is also reachable.
\par Similarly, if $b_i$ is reachable by $b_0,$ then there had to exist some point before it that allowed us to reach it. But this means that either we had to reach it via an unused edge (going forwards), or a used edge going backwards. The former, however, is impossible, since by the fact that there is flow out of $b_i$ there is flow into $b_i,$ and there is only one edge flowing into $b_i.$ \par This means that we had to have reached $t_j$ to get to $b_i.$ Hence, the supposed situation is impossible, which proves that the condition in the lemma always holds, as desired.
\end{proof}
\par In addition, we have the following property:
\begin{lemma}
For any $i \in \{1, 2, \ldots, n\},$ $p(t_i) \geq p(b_i) - 1.$
\end{lemma}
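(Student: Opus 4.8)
The plan is to prove $p(b_i) - p(t_i) \le 1$ for all $i$ as an invariant maintained at every step of Algorithm 1, by induction on the steps, in the ``first time it fails'' style of the proof of Lemma 4.1. Initially $p(b_i) = -i = p(t_i)$, so the gap is $0$; Step 3 leaves the potential unchanged; and in Step 4 every potential increases by at most $1$, so the invariant can be broken only at a Step 4 for which, just before it, $p(b_i) - p(t_i) = 1$ for some $i$ with $b_i \notin X$ and $t_i \in X$. I would assume we are at the first such Step 4 and aim for a contradiction.

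The first observation is that $(t_i, b_i)$ must be saturated: by Theorem 3.3 the conditions of Theorem 3.2 hold before this step, and $p(b_i) - p(t_i) = 1 > 0 = c((t_i, b_i))$ forces $f((t_i, b_i)) = u((t_i, b_i)) = 1$. Since $(t_i, b_i)$ is the unique edge of $E$ entering $b_i$ and the unique one leaving $t_i$, flow conservation gives a unique flow-carrying out-edge $(b_i, t_j)$ of $b_i$ (with $j \in \{1,\dots,n,n+1\}$) and a unique flow-carrying in-edge of $t_i$. I would then read off $\bar E$ near these vertices. Because $p(b_i) - p(t_i) \neq 0 = c((t_i, b_i))$, neither $(t_i, b_i)$ (already saturated) nor $(b_i, t_i)$ lies in $\bar E$; hence the only residual edge entering $b_i$ is the reverse $(t_j, b_i)$ of its flow-carrying out-edge, and that edge \emph{does} lie in $\bar E$ by Lemma 4.1. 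Consequently $b_i \in X$ if and only if $t_j \in X$; and since $t_{n+1} \notin X$ at any Step 4 (otherwise the algorithm would take Step 3), this already forces $b_i \notin X$ when $j = n+1$, so in that case I would instead have to rule out $t_i \in X$.

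To finish I would work with the flow path through $(t_i, b_i)$, say $b_0 \to t_{a_1} \to b_{a_1} \to \cdots \to t_{a_s} \to b_{a_s} \to t_{n+1}$, with $t_i = t_{a_q}$. By Lemma 4.1 (and its analogue for the source edges), the reverses of all the ``bottom-to-top'' edges along this path lie in $\bar E$; and the reverse of a ``top-to-bottom'' edge $(t_{a_r}, b_{a_r})$ lies in $\bar E$ exactly when the gap $p(b_{a_r}) - p(t_{a_r})$ is $0$ — which, by the inductive hypothesis together with Theorem 3.2, is the only alternative to its being $1$. Using these residual edges I would propagate reachability: in the case $j \le n$, from $t_i \in X$ to $t_j \in X$, hence $b_i \in X$, a contradiction; in the case $j = n+1$, showing that a residual path from $b_0$ to $t_i$ can be extended, via the flow, to a residual path reaching $t_{n+1}$, contradicting $t_{n+1} \notin X$.

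The hard part is precisely this propagation along the flow path. In Lemma 4.1 the problematic edge $(b_i, t_j)$ directly joined the two vertices being compared, so the argument was short; here the edge $(t_i, b_i)$ has dropped out of the residual graph, so reachability must be routed around it through the flow decomposition, and the bookkeeping — keeping track of exactly which residual edges are present, via the tightness conditions of Theorem 3.2 and the equalities of Lemma 4.1, and handling those vertices on the flow path whose own potential gap has already reached $1$ — is where the delicacy lies. I expect a careful analysis of this kind to close the argument.
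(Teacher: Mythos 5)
Your setup matches the paper's: induction over the steps of Algorithm 1, reduction to a first offending Step 4 with $p(b_i)-p(t_i)=1$, $t_i\in X$, $b_i\notin X$, the saturation of $(t_i,b_i)$ forced by Theorem 3.2, and the identification of the unique flow-carrying in-edge of $t_i$ and out-edge $(b_i,t_j)$ of $b_i$. But the part you label ``the hard part'' is exactly the content of the paper's proof, and your sketch of it does not close. Propagating reachability \emph{along the flow path} cannot work by bookkeeping alone: from $t_i$ the residual graph only lets you walk backwards (the forward edge $(t_i,b_i)$ is saturated and non-tight), and there is no a priori reason that walking backwards ever lets you jump forward past the blocked pair $(t_i,b_i)$ to $t_j$ or to $t_{n+1}$. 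The paper's key idea, which your proposal never mentions, is to go back only one step --- to a bottom vertex $b_k$ with a \emph{tight} edge into $t_i$ (the flow-predecessor, handled by Lemma 4.1 when $k\neq 0$, or, when the in-flow comes from $b_0$, a tight unsaturated in-edge guaranteed by the reachability of $t_i$) --- and then to invoke \emph{transitivity of $C_P$} (property (3)) to conclude that the shortcut edge $(b_k,t_j)$ actually exists in the graph. A short case analysis on $c((b_k,t_i))\in\{-1,0\}$, using the specific cost structure (cost $-1$ exactly when both $h^{-1}$-values and indices increase, or the head is $t_{n+1}$) and the fact that $(b_k,t_j)$ carries no flow, then shows this shortcut is tight and unsaturated, so $b_k\in X$ forces $t_j\in X$, and the reverse of the tight flow edge $(b_i,t_j)$ forces $b_i\in X$ (or directly $t_{n+1}\in X$), the desired contradiction. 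Without the transitivity-based shortcut and the cost comparison, your propagation has no mechanism to reach $t_j$ or $t_{n+1}$, so the argument as proposed is incomplete.

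A secondary problem: you invoke Lemma 4.1 ``and its analogue for the source edges.'' No such analogue is proved in the paper, and the proof of Lemma 4.1 does not extend to edges out of $b_0$: it relies on $b_i$ (with $i\geq 1$) having a unique in-edge, whereas $b_0$ is the always-reachable source, and a saturated edge $(b_0,t_m)$ can lose tightness when $t_m\notin X$ without violating the optimality conditions of Theorem 3.2. The paper is careful about precisely this point, which is why its proof treats the case $k=0$ separately by extracting a tight unsaturated in-edge of $t_i$ from the assumption $t_i\in X$ rather than from Lemma 4.1.
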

\begin{proof}
Again we proceed by contradiction. Suppose that at some point that $p(t_i) - p(b_i)$ was less than $-1,$ for some $i.$ Then, since $p(t_i) - p(b_i)$ can only increase or decrease by $1$ at each point, at some point, then, $p(t_i) - p(b_i) = -1.$ Furthermore, at this point, only $t_i$ was reachable by $b_0,$ and $t_{n+1}$ wasn't reachable, to cause the potential difference to change.
\par By the conditions given in Theorem 3.2, there has to be a flow from $t_i$ to $b_i.$ Then, note that, since there is flow into $t_i,$ there has to be another vertex, $b_k,$ with $k<i,$ where there is nonzero flow along the edge from $b_k$ to $t_i.$ If $k=0,$ then we can't reach $i$ directly from $s$ via an unused edge; this means that there is some other vertex $b_h$ with $h < i$ and where $p(t_i) - p(b_h) = c((b_h, t_i)).$ We take that vertex instead. Otherwise, if $k \neq 0,$ we just take $b_k.$ In either case, notice that we have that $p(t_i) - p(b_k) = c((b_k, t_i)),$ with the case $k \neq 0$ following from Lemma 4.1. 
\par In addition, consider the next vertex along the flow line, say $t_j,$ $j > i,$ after $b_i.$ Notice that there can't be any flow from $b_k$ to $t_j,$ as the edge from $b_i$ to $t_j$ has nonzero flow, and $k < i.$
\par We now do casework:
\begin{enumerate}
    \item $p(b_k) = p(t_i).$ Since the cost of an edge is either $-1$ or $0,$ we have that, by Lemma 4.1, $p(t_j) - p(b_i) = c((b_i, t_j)) \geq -1,$ or that $p(t_j) \geq p(t_i) = p(b_k).$ But this means that the cost of the edge between $b_k$ and $t_j$ is at most $p(t_j) - p(b_k)$. Since there can't be any flow between them, the cost must be at least the potential difference, so their potential difference is the same as the cost of the edge between them. However, since $b_k$ is reachable, this means that $t_j$ is too, which means that $b_i$ is reachable, contradiction.
    \item $p(b_k) = p(t_i) + 1.$ By a similar logic as above, we have that $p(t_j) \geq p(t_i) = p(b_k) - 1.$ But also, since there can't be nonzero flow in the edge between $b_k$ and $t_j,$ notice that $p(t_j) - p(b_k) \leq c((b_k, t_j)) \leq 0.$ Hence, either $p(b_k) = p(t_j),$ or $p(b_k) = p(t_j) + 1.$ The former gives us the same logic as the first case. For the latter, note that for this to occur, $p(t_j) = p(t_i) = p(b_i) - 1,$ or that $p(t_j) - p(b_i) = -1.$ But by Lemma 4.1, as we have flow on the edge from $b_i$ to $t_j$, this potential difference equals $c((b_i, t_j)).$ But this means that either $h^{-1}(k) < h^{-1}(i) < h^{-1}(j),$ or $t_j = t_{n+1}.$ In either case, note that this means that the cost of the edge between $b_h$ and $t_j$ is $-1$ and is equal to their potential difference, meaning that $t_j,$ and hence $b_i,$ is reachable. 
\end{enumerate}
In either case, we run into a contradiction, which proves the lemma.
\end{proof}
From here, we can now prove that we eventually get maximality from Theorem 3.3. 
\begin{proof}[Proof of Theorem 3.3, continued]Suppose for the sake of contradiction that this doesn't ever reach maximal flow. Then, Algorithm 1 doesn't terminate, and so eventually reaches a point where step 4 is constantly repeated, as step 3 increases flow and this maximal flow is well-defined; see the Ford-Fulkerson theorem, which is, for instance, \cite{williamson_2019}[Theorem 2.6]. 
\par In fact, here we can be more precise: notice that the maximal flow value is $n.$ To see this, notice that the value of the flow is the sum of the flows of the edges coming out of $b_0;$ with $n$ edges with capacity $1,$ this is at most $n.$ But to see maximality, notice that taking the edges between $b_0$ and $t_i,$ $t_i$ and $b_i,$ and $b_i$ to $t_{n+1},$ for each $i \in \{1, 2, \ldots, n\},$ gives a flow with value $n.$ Hence, maximal flow is $n.$ Therefore, for the sake of contradiction, we see that the flow value we reach is $v < n.$
\par Now, notice that, in general, step 4 cannot make $|X|$ fall; indeed, notice that step 4 alters potentials of vertices outside of $X,$ and doesn't alter flows, so every vertex in $X$ remains in $X.$
\par This means that, for us to never have $t \in X,$ eventually $X$ reaches some maximal set $X',$ since the number of elements is at most $2n+2.$ Furthermore, beyond this point, all of the flows of edges in $G_{P, h, C_P}$ remain constant. Consider the elements that must lie in this set $X'.$ 
\par Given that the only edges from $b_0$ are to vertices of the form $t_i,$ and that furthermore by construction in Algorithm 1 flows for each edge are integers (either $0$ or $1$), it follows that there is some $t_i,$ $i$ an integer between $1$ and $n,$ inclusive, so that the edge from $b_0$ to $t_i$ has flow $0$ (since we are assuming non-maximal flow). By the second part of Theorem 3.3, it follows that $p(t_i) - p(b_0) = p(t_i) \leq 1.$ If $t_i$ wasn't in $X'$ it would follow that the potential of $t_i$ would repeatedly increase by $1,$ contradicting this inequality.
\par From here, we have two cases. If the edge between $t_i$ and $b_i$ doesn't have a flow, then it follows that $p(b_i) - p(t_i) \leq 1,$ which using the above means that $p(b_i) - p(b_0) = p(b_i) - p(t_i) + p(t_i) - p(b_0) \leq 2.$ But again, by the same argument above, $b_i$ must lie in $X',$ as otherwise its potential will be unbounded as we continually repeat step 4 in Algorithm 1 (with $X$ never changing from $X'$). 
\par Now, notice that, since the only edge that points to $b_i$ is from $t_i,$ by construction, and since we assumed that the flow on the edge was $0,$ the edge between $b_i$ and $t_{n+1}$ has flow zero too. But the exact same argument shows that $t_{n+1} \in X',$ which contradicts the fact that we did step 4.
\par Otherwise, there is a flow on the edge between $t_i$ and $b_i.$ But this means that, by flow conservation, there exists an edge pointing into $t_i$ with flow, say from $b_j.$ But notice that all of the flow values are integers, and since the capacities are $1,$ this edge has flow $1.$ But notice then that the only edge pointing into $b_j$ is from $t_j,$ and it has capacity $1.$ This means that the edge from $b_j$ to $t_{n+1},$ by flow conservation, has flow $0,$ meaning that $p(t_{n+1}) - p(b_j) \leq 1.$ However, notice that, by Lemma 4.1, we have that $p(b_j) = p(t_i) - c((b_j, t_i)) \leq p(t_i) + 1$ the latter by construction of the costs. 
\par This means, however, that $p(t_{n+1}) - p(b_j) + p(b_j) \leq 2 + p(t_i) \leq 3,$ which again means that $p(t_{n+1})$ is bounded, so $t_{n+1}$ has to lie in $X',$ contradiction. This means that step 4 isn't used here, proving maximality, as desired.
\end{proof}
\section{Relating Graph and Poset Quantities}
\par We now take $G_{P, h, C_P}$ and relate it back to $A'_k$ and $D'_k.$ We begin by translating the poset quantities to the quantities on the graph, specifically flows and potentials, which \cite{britz1999finite}[\S 8] also does. However, the way these quantities are related to the poset quantities is somewhat different here compared to the corresponding version in \cite{britz1999finite}[\S 8].
\begin{proposition}
In $G_{P, h, C_P}$ given a fixed flow volume $v,$ the minimal cost of the flow is equal to $-A'_v.$
\end{proposition}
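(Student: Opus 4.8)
The plan is to decompose flows into edge-disjoint paths and match each path with an adjacentable sequence whose ascent statistic records the negative of the path's cost. First I would observe that $G_{P, h, C_P}$ is acyclic: along any directed edge the index of the endpoint strictly exceeds, or (for the edge into $t_{n+1}$) is unconstrained relative to, the index of the starting vertex. Indeed every edge of type (4) runs from $b_i$ to $t_j$ with $(h^{-1}(i), h^{-1}(j)) \in C_P$, hence $i < j$ by property (2) of $C_P$; edges of types (1)--(3) leave $b_0$, enter $t_{n+1}$, or preserve the index. Consequently any integral flow of value $v$ decomposes into exactly $v$ edge-disjoint directed paths from $b_0$ to $t_{n+1}$, with no residual cycles. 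Moreover these paths are internally vertex-disjoint, since each $t_i$ has a unique outgoing edge $(t_i, b_i)$ and each $b_i$ a unique incoming edge $(t_i, b_i)$, so two paths meeting at a common $t_i$ or $b_i$ would be forced to share that edge.

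Next I would set up the correspondence explicitly. A directed path $b_0 \to t_{i_1} \to b_{i_1} \to t_{i_2} \to \cdots \to t_{i_m} \to b_{i_m} \to t_{n+1}$ corresponds to the sequence $S = (h^{-1}(i_1), \ldots, h^{-1}(i_m))$, which is adjacentable because each step $b_{i_\ell} \to t_{i_{\ell+1}}$ is an edge of type (4), i.e. $(h^{-1}(i_\ell), h^{-1}(i_{\ell+1})) \in C_P$. The cost of the path is the sum of the costs of its type-(4) edges together with its final edge into $t_{n+1}$; the latter contributes $-1$, while $b_{i_\ell} \to t_{i_{\ell+1}}$ contributes $-1$ exactly when $h^{-1}(i_\ell) < h^{-1}(i_{\ell+1})$ in $P$, the companion condition $i_\ell < i_{\ell+1}$ being automatic from property (2). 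Hence the cost of the path equals $-asc(S)$. Since the $v$ paths of a flow of value $v$ are internally vertex-disjoint, the associated sequences $S_1, \ldots, S_v$ are pairwise disjoint, so $c(f) = -\sum_{j=1}^v asc(S_j) \geq -A'_v$ by the maximality defining $A'_v$.

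For the reverse inequality I would start from $v$ pairwise disjoint adjacentable sequences achieving $A'_v$ and run the correspondence backwards: each sequence yields a directed $b_0$--$t_{n+1}$ path, and since any shared edge would force the two sequences to contain a common element, the $v$ paths are pairwise edge-disjoint, so their union is a valid integral flow of value $v$ with cost $-A'_v$. Combining the two bounds shows that the minimum of $c(f)$ over integral flows of value $v$ equals $-A'_v$ (and for $v$ in the admissible range $0 \le v \le n$, by the maximality statement in Theorem 3.3 such a flow exists). Since the constraint matrix of a network flow problem is totally unimodular — equivalently, since Algorithm 1 produces an integral minimum-cost flow of each value $v \le n$ — this minimum coincides with the minimum over all real-valued flows of value $v$, giving the proposition.

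I expect the main obstacle to be the bookkeeping in the two directions of the correspondence: verifying that edge-disjointness of the path system is equivalent to set-disjointness of the sequences, and checking that the ``$+1$'' in the definition of $asc$ is exactly accounted for by the final edge into $t_{n+1}$ while each poset-ascent of the sequence matches a cost-$(-1)$ internal edge. The key fact making the cost accounting clean is $C_P \subseteq$ ($h$-order), so that every internal edge along a path is automatically $h$-ascending and its cost is governed solely by whether the corresponding step ascends in $P$. The reduction from real-valued to integral flows is routine but should be stated explicitly because the paper allows $f$ to be real-valued.
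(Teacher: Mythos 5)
Your proposal is correct and takes essentially the same approach as the paper: both directions rest on the correspondence between $b_0$--$t_{n+1}$ flow lines and adjacentable sequences, with each path's cost equal to $-asc(S)$ (the final edge into $t_{n+1}$ supplying the ``$+1$'' in $asc$). Your explicit treatment of acyclicity, vertex-disjointness, and the reduction from real-valued to integral flows tightens a step the paper handles only implicitly when it ``follows'' edges of an arbitrary flow of value $v$.
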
 
\begin{proof}
To see this, we will first show that this is attainable. To do this, suppose that we have sequences $S_1, S_2, \ldots, S_v$ that give the value $A'_v.$ If one of the sequences consists of elements $s_1, s_2, \ldots, s_l$ we add the flow line going from $b_0$ to $t_{h(s_1)},$ then $t_{h(s_1)}$ to $b_{h(s_1)},$ then $b_{h(s_1)}$ to $t_{h(s_2)},$ and so forth, until $b_{h(s_l)}$ to $t_{n+1}.$ By construction, notice that we may do this, since the edges from $b_{h(s_i)}$ to $t_{h(s_{i+1})}$ exist by construction, as we demanded $(s_i, s_{i+1})$ to lie in $C_P$ for the sequences. 
\par Doing this for each sequence gives us the flow. Note that this satisfies the flow requirements, since at each vertex, the in and out flows are the same for all the vertices besides $b_0, t_{n+1}.$ In addition, we only use each edge once, since the vertices are all distinct in the sequences (from construction).
\par As for the cost of this flow, note that along each flow line, if it corresponds to sequence $S$ of elements $s_1, s_2, \ldots, s_l$ we see that all the edges have cost $0$ except those edges from $b_{h(s_i)}$ to $t_{h(s_{i+1})},$ where $s_i < s_{i+1}$ or from $b_{i_k}$ to $t_{n+1}.$ But this means that this flow line goes through edges whose total cost is just $-asc(S),$ for this sequence $S.$ Adding this up over all flow lines yields a flow with cost $-A'_v$ and volume of flow $v.$
\par To show this is minimal, suppose we have some other flow with value $v.$ Given an edge from $b_0,$ we can ``follow" this edge (since each vertex has either only one edge going in or one edge going out, except for $t_{n+1}$ or $b_0,$ and by conservation of flow there is exactly one for each) until we reach $t_{n+1}.$ This gives us a sequence of vertices.
\par We can repeat this for all the other edges from $b_0,$ yielding us $v$ distinct sequences. Note then that the cost of this flow is just the negative sum of the ascents over each sequence, which is at least $-A'_v,$ by the argument above. Notice that also by the fact that we followed edges that every pair of adjacent elements in a given sequence lie in $C_P,$ so these are actually adjacentable sequences.
\par We thus see that $-A'_v$ is the minimum cost of a flow with flow volume $v,$ as desired.
\end{proof}
\par Now, we introduce another quantity. Let $p= |p(t_{n+1})|.$ We say that $P_p$ is the number of $i \in \{1, 2, \ldots, n\}$ such that $p(t_i) = p(b_i) \in \{-p+1, -p+2, \ldots, 0\}.$
\begin{proposition}
At any point along Algorithm 1, $P_p + A'_v \geq n + vp.$ 
\end{proposition}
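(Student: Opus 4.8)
The plan is to read both sides of the asserted inequality off the reduced costs of the flow-carrying edges of the current flow, using the minimum-cost invariant that Algorithm~1 maintains together with the classical potential identity for flows. Write $p = |p(t_{n+1})|$ as in the statement, and for an edge $e = (x,y) \in E$ let $\bar c(e) = c(e) + p(x) - p(y)$ be its reduced cost. For any flow $f$ of value $v$, the sum $\sum_{e = (x,y)} (p(x)-p(y)) f(e)$ collapses by flow conservation to $v\bigl(p(b_0) - p(t_{n+1})\bigr)$, so
\[
\sum_{e \in E} c(e) f(e) \;=\; \sum_{e \in E} \bar c(e) f(e) \;+\; v\, p(t_{n+1}).
\]
By Theorem~3.3 the flow currently held by the algorithm has minimum cost for its value $v$, hence cost $-A'_v$ by Proposition~5.1; using $p(b_0) = 0$ and the auxiliary fact (discussed below) that $p(t_{n+1}) = -p$, this rearranges to
\[
A'_v - vp \;=\; \sum_{e \in E} \bigl(-\bar c(e)\bigr) f(e) \;=\; \sum_{e:\, f(e) = 1} \bigl(-\bar c(e)\bigr),
\]
the last equality since every capacity equals $1$ and, by the minimum-cost conditions of Theorem~3.2, $\bar c(e) \le 0$ on every edge with $f(e) > 0$. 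As the proposition is equivalent to $A'_v - vp \ge n - P_p$, it suffices to prove $\sum_{e:\,f(e)=1} (-\bar c(e)) \ge n - P_p$.

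To evaluate the left side, I would first apply Lemma~4.1: every flow-carrying edge $b_i \to t_j$ with $i \ge 1$ has $\bar c(e) = 0$, which disposes of all the ``sink'' edges $b_i \to t_{n+1}$ and all the ``internal'' edges, leaving only the source edges $b_0 \to t_i$ and the edges $t_i \to b_i$. If $i$ is \emph{active} (flow passes through $t_i$ and $b_i$), then $\bar c((t_i,b_i)) = p(t_i) - p(b_i)$ lies in $\{-1,0\}$ --- the minimum-cost conditions give $\le 0$ and Lemma~4.2 gives $\ge -1$ --- so $-\bar c((t_i,b_i))$ is $1$ when $p(t_i) = p(b_i)-1$ and $0$ otherwise; and a source edge $b_0 \to t_i$ carries flow exactly when $t_i$ begins one of the $v$ flow-paths, in which case $-\bar c((b_0,t_i)) = p(t_i) \ge 0$ by the minimum-cost conditions. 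Thus the left side equals $\#\{\text{active } i : p(t_i) = p(b_i)-1\} + \sum_{i \text{ first}} p(t_i)$. Now using the estimates $p(b_i) \ge -p+1$ for all $i$ and $p(t_i) \le p(b_i)$ for all $i$ (so in fact $p(t_i) - p(b_i) \in \{-1,0\}$ always, with equality forced whenever $i$ is inactive), one checks that an index $i$ fails to be counted by $P_p$ precisely when either $p(t_i) = p(b_i) - 1$ (which forces $i$ active) or $t_i$ begins a flow-path and $p(t_i) = p(b_i) \ge 1$. Indices of the first kind are in bijection with the unit contributions of the edges $t_i \to b_i$, while each index of the second kind is dominated by the corresponding term $p(t_i) \ge 1$ of $\sum_{i \text{ first}} p(t_i)$; summing gives $\sum_{e:\,f(e)=1}(-\bar c(e)) \ge n - P_p$.

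It remains to install the potential estimates used above, all of which are short invariants of Algorithm~1 proved in the manner of Lemmas~4.1--4.2: one looks at the first step at which an estimate would fail and rules out the reachability pattern that Step~4 would need. Namely, $p(b_0) = 0$ since $b_0$ is permanently reachable; $p(b_i) \ge -p+1$ comes from the minimum-cost condition on $b_i \to t_{n+1}$ when that edge is empty and from Lemma~4.1 when it carries flow; and $p(t_i) \le p(b_i)$ cannot first fail at a Step~4, because just before such a step one would have $p(t_i) = p(b_i)$, making $t_i \to b_i$ an edge of zero reduced cost, which forces $t_i$ to be reachable whenever $b_i$ is (whether or not $i$ is active) --- incompatible with the pattern $t_i \notin X$, $b_i \in X$ that Step~4 requires. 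Finally $p(t_{n+1}) \le 0$, hence $p(t_{n+1}) = -p$: a Step~3 augmentation runs along edges of zero reduced cost, so it alters the cost by exactly $p(t_{n+1}) - p(b_0) = p(t_{n+1})$, whence $A'_{v+1} - A'_v = -p(t_{n+1})$, and since $A'_k$ is non-decreasing in $k$ (splitting a sequence, or appending an unused element, never lowers the total ascent), $p(t_{n+1}) \le 0$ at every augmentation and hence throughout. I expect the real work to lie in this bookkeeping around the source edges $b_0 \to t_i$: their reduced cost can be strictly below $-1$, so they must enter through inequalities rather than bijections --- which is precisely why the argument here yields only ``$\ge$'', the remaining slack being eliminated in Section~6 once Algorithm~1 has terminated.
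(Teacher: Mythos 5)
Your argument is correct, but it reaches the inequality by a genuinely different mechanism than the paper does. The paper's proof is dynamic: it inducts along Algorithm 1, checking that each step preserves $P_p + A'_v \geq n + vp$ --- at an augmentation the minimum cost drops by exactly $p$ (so $A'_v$ and $n+vp$ both rise by $p$), and at a potential-raising step it argues, flow line by flow line, that each of the $v$ lines can account for at most one index leaving the count $P_p$, matching the change of $vp$. You instead give a static certificate valid at any moment: combining Theorem 3.3 and Proposition 5.1 with the standard reduced-cost telescoping identity, you write $A'_v - vp = \sum_{e:\,f(e)=1} \bigl(-\bar c(e)\bigr)$ and then charge every index missing from $P_p$ injectively to a nonnegative term on the right (a unit contribution from an edge $t_i \to b_i$ with $p(t_i) = p(b_i)-1$, or a term $p(t_i) \geq 1$ from a saturated source edge), using Lemmas 4.1 and 4.2 to kill or bound the remaining terms. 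The trade-off is that your route needs a few potential invariants the paper never isolates ($p(b_0)=0$, $p(t_{n+1}) \leq 0$ so that $p(t_{n+1})=-p$, $p(b_i) \geq -p+1$, and $p(t_i) \leq p(b_i)$), but in exchange it replaces the somewhat delicate block-by-block bookkeeping of the paper's potential step with a transparent accounting identity, and it makes visible exactly where the slack sits (source edges with $p(t_i) \geq 2$, or saturated $t_i\to b_i$ edges at indices already outside $P_p$). The one soft spot is your justification of $p(t_i) \leq p(b_i)$ when no flow passes through $i$: in that case the zero-reduced-cost forward edge $t_i \to b_i$ only shows $t_i \in X \Rightarrow b_i \in X$, which is the wrong direction; what you actually need is that an inactive $b_i$ can only be entered through its unique in-edge from $t_i$ (it has no flow-carrying out-edges to traverse backwards), so $b_i \in X$ forces $t_i \in X$ --- this is the same observation used in the paper's proof of Lemma 4.1, and it closes the case in one line, so the gap is cosmetic rather than structural.
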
 
\begin{proof}
This method is very similar to that of \cite{britz1999finite}[\S 8], in that we argue that, at every step along the algorithm, this inequality must continue to hold. We cannot jump immediately to equality yet, however; this will be the subject of Section 6.
\par Note that when the flow increases by $1,$ the only thing that changes is the cost of the flow, which decreases (by construction of this new flow line from the algorithm) by $p,$ since along this new flow-line, for each edge, the cost is equal to the difference in potential. 
\par If there are no flow lines, then note that if $t_i$ is reachable, so is $b_i,$ and if $t_i$ has potential $0,$ it is reachable, so again we have no problems here (potential drop of $t$ doesn't change $P_p'$)
\par Now, suppose that the potential of $t_{n+1}$ increases by $1.$ Consider a given flow-line, say reaching top and bottom pairs with indices $i_1, i_2, \ldots, i_k.$ Then, note that if $t_{i_j}$ is reachable by $b_0,$ so is $b_{i_{j-1}}.$
\par We can consider consecutive blocks of vertices reachable by $b_0$ along this flow line. Suppose we have a block running from $b_{i_c}$ to $t_{i_d}.$ Note that, among these, their potentials stay the same. Furthermore, note that this sequence cannot cause $P_p'$ to drop; the only place where one is no longer counted was if initially $t_{i_d}$ and $b_{i_d}$ had the same potentials. But note that, from un-reachability, $p(t_{i_c})$ increases by $1,$ which means that it matches up with $p(b_{i_c})$ now.
\par Hence, the only way for there to be a drop would be either if a pair of vertices had potential $0$ and went up to $1,$ or if there is a block that went directly to $t_{i_1}.$ But these are mutually distinct events, for a potential of $0$ going up to $1$ can only mean that $t_{i_1}$ and $b_{i_1}$ had potential $0$ and weren't reachable (if any other pair of vertices had potential $0,$ the top would be reachable).  
\par This means that, when $p$ rises by $1,$ $P_p'$ drops by at most $v.$ But this the establishes the inequality.
\end{proof}
\par Note that $D'_p \geq P'_p.$ To see this, we let the sequences be so that the $i$th sequence has the indices of those whose potentials of the top and bottom vertices are all $-i+1.$
\par This is a valid sequence for two reasons. First, for the actual non-increasing part, note that between the bottom vertex of one and the top vertex of the next, the cost can't be less than the potential difference, which is $0$ (either there is no flow, or there is flow, which means that this follows from Lemma 4.1). Hence, we see that this forms a non-increasing sequence.
\par Now, we claim that if the potential of $t_a, b_a$ are $i,$ and that for $t_c, b_c$ is $i-1,$ then $(h^{-1}(c), h^{-1}(a)) \not \in C_P.$ To see this, if not we would have an edge from $b_c$ to $t_a.$ But then notice that Lemma 4.1 and the condition 1 from Theorem 3.2 requires that $p(t_a) - p(b_c) \leq c((b_c, t_a)) \leq 0,$ contradiction. This means that the sequences can't be semi-overlapping, so this is a valid choice of sequences, giving a value of the sum of the $desc$ over these sequences as $P_p'.$
\par The main result, that $A'_v$ and $D'_p$ are conjugate in the sense we described, will follow in the next section.
\section{Establishing Equality}
This section follows \cite{britz1999finite}[\S 5] in concept, though the actual method of calculation is slightly different, due to different conditions on the ascending and non-ascending sequences.
\par We use the same idea of considering intersections, however. Suppose that we are given sequences $d_1, d_2, \ldots, d_p$ as the non-increasing sequences that meet the condition for $D'_p,$ and $a_1, a_2, \ldots, a_v$ for $A'_v.$ Notice that if the $d_i$ are contained in sequences that are not semi-overlapping, then the $d_i$ are not semi-overlapping either.
\par Fixing some $a_i,$ note that $a_i \cap d_1, a_i \cap d_2, \ldots, a_i \cap d_p$ (the subsequences of $a_i$ that are also part of $d_1, d_2, \ldots, d_p,$ respectively) are also not pairwise semi-overlapping, from construction.
\par In fact, notice that if element $x \in a_i \cap d_m$ and $y \in a_i \cap d_j$ are so that $h(x) < h(y),$ then notice that, by construction, $(x, y) \in C_P.$ But this means that all elements in $a_i \cap d_m$ occur before those in $a_i \cap d_j.$ 
\par Now, notice that for pair of consecutive elements within $a_i \cap d_j,$ say $x$ and $x',$ there exists a non-ascent in-between $x$ and $x'$ in $a_i,$ as otherwise $x < x',$ contradiction. Furthermore, in-between these elements, by the argument above, no other $a_i \cap d_k$ can have elements, meaning that each element of $a_i \cap d_j,$ letting $j$ vary, other than the last for each, corresponds uniquely to a non-ascent.
\par This means that we have $\sum\limits_{j=1}^{p} |a_i \cap d_j| \leq p + (des(a_i)),$ where $des(a_i)$ is the number of ``non-ascents," which by definition we can see satisfies $des(a_i) = |a_i| - asc(a_i).$ Note that this isn't $desc(a_i).$ 
\par This in turn yields that $\sum\limits_{j=1}^{p} |a_i \cap d_j| \leq p + (des(a_i)) \leq p + |a_i| - asc(a_i).$ But then we have that $\sum\limits_{i=1}^v \sum\limits_{j=1}^{p} |a_i \cap d_j| \leq vp + \sum\limits_{i=1}^v |a_i| - A'_v.$
\par But by PIE, since the $a_i$ are disjoint and the $d_j$ are disjoint, we have that $D'_p = |\bigcup_{j=1}^p d_j| = |\bigcup_{j=1}^p d_j \cup \bigcup_{i=1}^v a_i| - |\bigcup_{i=1}^v a_i| + \sum\limits_{i=1}^v \sum\limits_{j=1}^{p} |a_i \cap d_j| \leq n - |\bigcup_{i=1}^v a_i| + vp + \sum\limits_{i=1}^v |a_i| - A'_v = n + vp - A'_v.$
\par For equality, now note, for each pair $(p, v)$ that are reachable for $|p(t_{n+1})|$ and flow value, respectively, we have that $n + vp - A'_v \geq D'_p \geq P'_p \geq n + vp - A'_v.$ 
\par To get the desired conjugacy, the exact argument at the end of \cite{britz1999finite}[\S 8] allows us to finish. Specifically, we know now that $D'_p + A'_v = n + vp,$ where $p, v$ are values that are attained for $p(t_{n+1})$ and flow value, respectively, during Algorithm 1. We just need to check that we can apply the argument in that section here to all of the indices.
\par Now, notice that, by Theorem 3.3, the algorithm terminates when flow is maximal for the graph, which is when $v = n$ (taking, for each $i \in \{1, 2, \ldots, n\}$ a flow from $b_0$ to $t_i$ to $b_i$ to $t_{n+1}$). Furthermore, note that $v$ starts at $0.$ 
\par Therefore, notice that, at this ending point, we have flow value $n$ and some potential $p_0.$ When this occurs, notice that $A'_n + D'_{p_0} = n + np_0 \implies D'_{p_0} = np_0.$ But notice that, by construction, we see that $D'_{p_0} \leq D'_n = n$ meaning that $p_0 = 0$ or $p_0 = 1,$ so by a similar argument we see that the value of $|p(t_{n+1})|$ attains all values between $1$ and $n.$
\par Thus, for each $i \in \{1, 2, \ldots, n\}$ when flow value increases from $i-1$ to $i$ in the algorithm, $\lambda_i = A_i' - A_{i-1}' = p.$
\par Notice that we can show that $\lambda_i$ and $\mu_i$ are partitions, from the same logic as in \cite[\S 8]{britz1999finite}. This is because, as we perform this process, we have that $p$ is weakly decreasing, giving us that the $\lambda_i$ are weakly decreasing. As for the $\mu_i,$ notice that, by a similar logic, when the potential goes from $p$ to $p-1$, we have that $\mu_p = D_p' - D_{p-1}' = (n + vp - A_v') - (n + v(p-1) - A_v') = v.$ But then, observe that, throughout the process, $p$ falls and $v$ rises, so again the $\mu_i$ are also weakly decreasing if we start from $i = 1.$ This gives us that these are partitions.
\par This yields us the desired conjugacy of $\lambda_i = A'_i - A'_{i-1}$ and $\mu_i = D'_i - D'_{i-1},$ as desired, which proves Theorem 2.1.
\section{Corollaries}
\par Theorem 2.1 gives us both the localized Greene's theorem for permuation posets and the original Greene-Kleitman duality theorem. We prove each of these results using Theorem 2.1 in this section.
\begin{corollary}[Localized Greene's Theorem, Lemma 2.1 \cite{lewis2020scaling}]
Let $\sigma$ be a permutation on $n$ elements, $\{1, 2, \ldots, n\}.$ Then, with $A_k^*$ as the maximal sum of the ascents of $k$ disjoint sequences, and $D_k^*$ as the maximal sum of the longest descending subsequences in $k$ consecutive sequences (as we noted in the introduction, Section 1, which are defined as per \cite{Lewis2019}), if $\lambda_k = A_k^* - A_{k-1}^*$ and $\mu_k = D_k^* - D_{k-1}^*,$ then $\lambda_1 + \lambda_2 + \cdots$ and $\mu_1 + \mu_2 + \cdots$ form conjugate partitions of $n.$
\end{corollary}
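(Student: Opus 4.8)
The plan is to obtain Corollary 7.1 as the special case of Theorem 2.1 in which $P$ is the chain $1 < 2 < \cdots < n$ on $S_P = \{1,2,\ldots,n\}$, the bijection is $h = \sigma^{-1}$, and $C_P = \{(x,y) \in S_P \times S_P : h(x) < h(y)\}$ is the full strict order induced by $h$. This is, up to replacing $\sigma$ by $\sigma^{-1}$, the choice recorded right after Theorem 2.1; since inversion is a bijection of the symmetric group the two formulations are equivalent, and $h = \sigma^{-1}$ is the one that matches the localized notation directly. The first step is a routine verification: this $C_P$ satisfies defining conditions (2) and (3) by transitivity of $<$ on the integers, and condition (1) because $C_P$ consists of \emph{all} pairs with $h(x) < h(y)$. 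I would also record the simplification, noted after the definition of adjacentable sequences, that when $C_P$ is the entire $h$-order a sequence is adjacentable if and only if it is $h$-ordered; so here both $A'_k$ and $D'_k$ range over families of $h$-ordered sequences.

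The second step sets up the dictionary with the word $w = (\sigma(1), \sigma(2), \ldots, \sigma(n))$. An element $i \in S_P$ is a value of $w$, occurring in position $\sigma^{-1}(i) = h(i)$; hence $s_1, \ldots, s_m$ is $h$-ordered, i.e. $\sigma^{-1}(s_1) < \cdots < \sigma^{-1}(s_m)$, exactly when $(s_1,\ldots,s_m)$ is a subsequence of $w$. Under this identification $asc(S)$ — the number of $j$ with $s_j < s_{j+1}$, plus one — is precisely the ascent statistic of the corresponding subsequence of $w$ used in the definition of $A^*_k$; and since disjointness of element-sets matches disjointness of subsequences of the permutation word $w$, maximizing over $k$ disjoint adjacentable families yields $A'_k = A^*_k$ for all $k$.

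The third step handles the $D'_k$ side. Since $P$ is a total order, $s_i \not< s_j$ means $s_i > s_j$, so for $h$-ordered $S$ the quantity $desc(S)$ is the length of the longest strictly decreasing subsequence of $S$, i.e. the longest descending subsequence of the corresponding subword of $w$. The delicate point is the non-semi-overlapping condition: unwinding the definition with $C_P$ the full $h$-order, two disjoint $h$-ordered sequences $S, T$ are \emph{not} semi-overlapping exactly when every $h$-value (equivalently, $w$-position) used by one of them is smaller than every $h$-value used by the other; so $k$ pairwise non-semi-overlapping sequences are exactly $k$ subsequences of $w$ lying in pairwise non-interleaving blocks of positions — the ``consecutive'' condition of \cite{lewis2020scaling, Lewis2019} (the example $2,4,3,1$ with position-blocks $\{1,4\}$ and $\{2,3\}$ is exactly the interleaving both conventions forbid). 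This gives $D'_k = D^*_k$ for all $k$, and Theorem 2.1 for this $P$, $h$, $C_P$ then says precisely that $\lambda_k = A^*_k - A^*_{k-1}$ and $\mu_k = D^*_k - D^*_{k-1}$ are conjugate partitions of $n$.

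I expect the main obstacle to be the bookkeeping in the third step: correctly unwinding the quantifiers in the definition of semi-overlapping for this particular $C_P$ and confirming that it agrees with ``consecutive'' as used in the solitons literature, while consistently distinguishing positions from values (equivalently, $\sigma$ from $\sigma^{-1}$). The remaining steps are direct substitutions into the definitions of Section 2.
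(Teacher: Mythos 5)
Your overall route is the same as the paper's: specialize Theorem 2.1 to the chain $1<2<\cdots<n$ with $h=\sigma^{-1}$ and $C_P$ the full $h$-order, identify $h$-ordered sequences with subsequences of the word $w=(\sigma(1),\ldots,\sigma(n))$, and match $asc$ and $desc$ with the localized statistics. The first two steps are exactly the paper's argument and are fine.

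The gap is in your third step, in the sentence claiming that pairwise non-semi-overlapping families are \emph{exactly} the families allowed in the definition of $D_k^*$. Unwinding the quantifiers as you do shows only that non-semi-overlapping forces the position sets of any two sequences to be separated (all positions of one before all positions of the other); it does not force the sequences to be contiguous blocks that concatenate to the whole word $w$, which is what the ``consecutive'' decomposition in the localized theorem requires. For example, in a word of length $6$ the two sequences occupying positions $\{1,2\}$ and $\{5,6\}$ are non-semi-overlapping but are not a decomposition of $w$ into two consecutive blocks. So $D_k'$ a priori maximizes over a strictly larger collection than $D_k^*$: the containment gives $D_k'\geq D_k^*$ immediately, but the inequality $D_k'\leq D_k^*$ still needs an argument. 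The paper closes this by taking a maximal non-semi-overlapping family, re-indexing the sequences in $h$-order, and inserting every unused element of $\{1,\ldots,n\}$ into the appropriate sequence right after the largest-position element below it; this preserves non-semi-overlapping, cannot decrease any $desc$ (a decreasing subsequence of a block survives insertion), and by maximality cannot increase the sum, so one may assume the blocks cover everything and concatenate to $w$, matching the definition of $D_k^*$. You flagged this as the delicate point but left it as an asserted equivalence rather than proving it; adding the padding/extension argument (or an equivalent monotonicity-of-$desc$ argument) is what is needed to make the step correct.
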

\begin{proof}
\par Take the poset of $1, 2, \ldots, n$ with the natural ordering, and suppose that $h$ is the inverse of the permutation $\sigma,$ which is a bijection. Let $C_P$ just be the set $\{(x, y)|1 \leq x, y, \leq n, h(x) < h(y)\};$ in this case, $h-$ordering and adjacentable are the same. Apply Theorem 2.1, obtaining $A_k'$ and $D_k'.$ 
\par Then, notice that $A_k'$ is the same as $A_k^*$ since $asc$ is defined the same way. To see this, notice that any sequence $S,$ with elements $s_1, s_2, \ldots, s_l,$ where $\sigma(s_j) < \sigma(s_{j+1})$ for each index $j,$ can be thought of as a subsequence of elements from $\sigma(1), \sigma(2), \ldots, \sigma(n),$ as the above tells us that $\sigma^{-1}(s_1), \sigma^{-1}(s_2), \ldots, \sigma^{-1}(s_l)$ is a strictly increasing sequence. This means we may re-write the sequence as $\sigma(x_1), \sigma(x_2), \ldots, \sigma(x_l)$ for an increasing sequence $x_1, \ldots, x_l.$ But then $asc(S)$ is just the number of indices $j$ where $\sigma(x_j) < \sigma(x_{j+1})$ plus one (or $0$ if $S$ is empty), which matches. This means that $A_k',$ as the maximum of the sum of $asc$ of $k$ disjoint sequences, is the same as $A_k^*.$
\par As for $D_k',$ first notice that $desc$ is defined the same way as well, since the condition that $s_i \not < s_j$ for each $i < j,$ with the totally ordered set, just means that the sequence must be strictly decreasing. Now, suppose that we have sequences $S_1, S_2, \ldots, S_k$ that give the maximal value, such that no two are semi-overlapping. 
\par Now, since $C_P$ is just $h-$ordering, notice that for each pair of elements $x, y \in \{1, 2, \ldots, n\},$ either $(x, y) \in C_P$ or $(y, x) \in C_P.$ We may thus re-index the sequences so that $\forall i < j, \forall a \in S_i, b \in S_j, h(a) < h(b)$ (the semi-overlapping condition allows us to do this re-indexing).
\par From here, suppose that some element $x \in \{1, 2, \ldots, n\}$ not in any of the $S_i.$ Let $j$ be the largest index so that $\exists a \in S_j$ where $h(a) < h(x),$ and suppose that $a$ is chosen so that $h(a)$ is the maximum value of $\{h(b)|b \in S_j, h(b) < h(x)\}.$ We may then add $x$ to $S_j$ right after $a;$ by construction, this preserves all of the conditions of non semi-overlapping. Furthermore, notice that the $\sum_{i=1}^k desc(S_i)$ cannot decrease; indeed, we may take the same descending sequence within $S_j.$ By maximality, this value also can't increase.
\par We may thus assume that maximal $S_1, S_2, \ldots, S_k$ covers all of the elements in $\{1, 2, \ldots, n\}.$ But notice then that, as required in \cite{Lewis2019}, $S_1|S_2|\ldots|S_k$ is the sequence $h^{-1}(1), h^{-1}(2), \ldots,h^{-1}(n),$ or $\sigma(1), \sigma(2), \ldots, \sigma(n).$ This means that the value of $D_k',$ as defined here, is the same as $D_k^*$. This proves the desired.
\end{proof}
\begin{corollary}[Classical Greene-Kleitman Duality Theorem, Theorem 1.6 \cite{GREENE197669}] 
Given a poset $P,$ let $A_k$ be the maximal number of elements within $k$ disjoint chains, and $D_p$ the maximal number of elements within $p$ disjoint anti-chains. Then, if $\lambda_i = A_i - A_{i-1}$ and $\mu_i = D_i - D_{i-1}$ for $i \geq 1,$ with $A_0 = D_0 = 0,$ then $\lambda_1 + \lambda_2 + \ldots$ and $\mu_1 + \mu_2 + \cdots$ are conjugate partitions of $n.$
\end{corollary}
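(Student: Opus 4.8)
The plan is to specialize Theorem 2.1. We take the given poset $P$, let $h$ be any linear extension of $P$ (so $x < y$ in $P$ implies $h(x) < h(y)$), and set $C_P = \{(x,y) : x < y \text{ in } P\}$. First one checks this choice is admissible: property (1) holds because $x<y$ already puts $(x,y)$ in $C_P$, property (2) is exactly the linear-extension condition, and property (3) is transitivity of $P$. Under this choice a sequence $s_1, \ldots, s_m$ is adjacentable precisely when $s_1 < s_2 < \cdots < s_m$ in $P$, i.e. it is a chain written in increasing order, in which case $asc(s_1,\ldots,s_m) = (m-1)+1 = m$ is just the number of elements of the chain; hence $A_k' = A_k$ for every $k$. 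Moreover, since $h$ is a linear extension, whenever $S = s_1, \ldots, s_m$ is $h$-ordered, any subsequence $s_{i_1}, \ldots, s_{i_r}$ with $s_{i_a} \not< s_{i_b}$ in $P$ for all $a<b$ is automatically an \emph{antichain}: the relation $s_{i_b} < s_{i_a}$ would give $h(s_{i_b}) < h(s_{i_a})$, contradicting $h(s_{i_a}) < h(s_{i_b})$, which holds because $a<b$ and $S$ is $h$-ordered. Consequently $desc(S)$ equals the size of the largest antichain contained in the underlying set of $S$.

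These identifications make $D_k' \le D_k$ immediate: from $h$-ordered, pairwise non-semi-overlapping $S_1, \ldots, S_k$ achieving $D_k'$, choose inside each $S_i$ a largest subsequence $B_i$ of the above type; the $B_i$ are then disjoint antichains with $\sum_i |B_i| = \sum_i desc(S_i) = D_k'$, so $D_k' \le D_k$. For the reverse inequality one must exhibit disjoint antichains realizing $D_k$ whose $h$-orderings avoid all semi-overlaps, and this is the one place that needs care. Start with disjoint antichains $T_1, \ldots, T_k$ with $\sum_i |T_i| = D_k$ and put $U = \bigcup_i T_i$, so $|U| = D_k$ and, since a chain meets each $T_i$ at most once, no chain of $P$ lying in $U$ has more than $k$ elements. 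For $x \in U$ let $\rho(x)$ be the number of elements in a longest chain of $U$ with top element $x$, so $\rho(x) \in \{1,\ldots,k\}$, and set $T_i^{\ast} = \{x \in U : \rho(x) = i\}$. Then each $T_i^{\ast}$ is an antichain, the $T_i^{\ast}$ partition $U$, and $x < y$ in $P$ with $x,y \in U$ forces $\rho(x) < \rho(y)$. Hence for $i<j$ no element of $T_j^{\ast}$ lies below any element of $T_i^{\ast}$ in $P$, which is precisely the failure of one of the two comparabilities needed for the $h$-orderings $S_i$ of $T_i^{\ast}$ and $S_j$ of $T_j^{\ast}$ to be semi-overlapping; so $S_1, \ldots, S_k$ is pairwise non-semi-overlapping. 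Since each $T_i^{\ast}$ is an antichain, $desc(S_i) = |T_i^{\ast}|$, whence $\sum_i desc(S_i) = |U| = D_k$ and $D_k' \ge D_k$.

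Combining, $A_k' = A_k$ and $D_k' = D_k$ for all $k$, so Theorem 2.1 immediately gives that $\lambda_i = A_i - A_{i-1}$ and $\mu_i = D_i - D_{i-1}$ are conjugate partitions of $n$, which is the assertion of the corollary. The main obstacle, as flagged at the end of Section 2, is exactly the equality $D_k' = D_k$: the classical definition of $D_k$ imposes no compatibility between the $k$ antichains, whereas $D_k'$ forbids semi-overlapping pairs, so the content is that passing from an arbitrary optimal family to the canonical height-decomposition of its union costs no elements while automatically destroying every semi-overlap.
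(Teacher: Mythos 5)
Your proposal is correct, and the overall frame (linear extension $h$, $C_P=\{(x,y): x<y\}$, identification $A_k'=A_k$, and the easy inequality $D_k'\le D_k$ via the observation that $h$-ordered ``non-ascending'' subsequences are antichains) coincides with the paper's. Where you genuinely diverge is the key step $D_k'\ge D_k$. The paper fixes an optimal family of $k$ antichains and runs an exchange procedure: for a violating pair $d_i,d_j$ it swaps the sets $A=\{x\in d_i:\exists y\in d_j,\ y<x\}$ and $B=\{y\in d_j:\exists x\in d_i,\ y<x\}$, verifies through a four-case analysis that the results are still antichains of the same total size, and argues termination by showing each swap strictly decreases a lexicographic statistic; the terminal family is pairwise non-semi-overlapping. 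You instead discard the original antichains entirely and re-decompose their union $U$ canonically by height: $\rho(x)=$ length of a longest chain in $U$ topped by $x$, with levels $T_i^*=\rho^{-1}(i)$. Since a chain meets each of the $k$ original antichains at most once, $\rho\le k$; each level is an antichain; and $x<y$ forces $\rho(x)<\rho(y)$, so for $i<j$ no element of $T_j^*$ lies below an element of $T_i^*$, killing one of the two comparabilities that semi-overlapping requires. This Mirsky-style argument is a one-shot construction that avoids both the case analysis and the termination argument, at the cost of losing any connection to the originally chosen antichains (which the paper's exchange procedure partially preserves, though nothing in the corollary needs that). One small point worth stating explicitly in your write-up: some levels $T_i^*$ may be empty, so to exhibit exactly $k$ sequences you should pad with empty sequences (allowed, with $desc=0$); this does not affect the bound $\sum_i desc(S_i)=|U|=D_k$.
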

\begin{proof}
Let $P$ be the poset, and $h$ any linear extension of $P.$ From here, let $C_P$ be just the set $\{(x, y)| x < y\};$ notice that this satisfies the properties given.
\par Then, notice that any adjacentable sequence, by construction, must consist solely of elements where any two adjacent are increasing; in other words, they must be chains. Therefore, it follows that $A'_k$ in Theorem 2.1 just corresponds to the maximal length of $k$ disjoint chains, which is just $A_k.$
\par As for $D_p',$ we need to do a little more work. Notice that $D_p' \leq D_p.$ To see this, suppose that sequences $S_1, \ldots, S_p$ had subsequences $d_1, \ldots, d_p,$ whose sum of lengths was $D_p'.$ By construction, for each sequence $d_j,$ if the elements in order were $s_{1, j}, \ldots, s_{l_j, j},$ then notice that the condition that $s_{a, j} \not < s_{b, j}$ for each $a, b,$ combined with the ordering $h,$ thus demands that, in fact, $s_{a, j}$ and $s_{b, j}$ are not comparable. This means that each of the $d_i$ are anti-chains.
\par To show the other direction: suppose that we have $p$ anti-chains by $d_1, d_2, \ldots, d_p$ so that their sum has maximal size. Consider the ordered tuple obtained by taking the elements for $d_1$ in order, followed by the elements for $d_2$ in order, and so forth, and order these lexicographically using the linear extension. For instance, if we have the poset on five elements $a, b, c, d, e,$ with relations $a < b,  b < d, c < d,$ and $d<e,$ with $h(a) = 1, h(b) = 2, h(c) = 3, h(d) = 4,$ and $h(e) = 5,$ taking $d_1$ to be the sequence $a, c$ and $d_2$ to be $b$ yields the tuple $(a, c, b).$
\par Now, consider the following operation: given $d_i$ and $d_j,$ where $i < j,$ let $A = \{x \in d_i|\exists y \in d_j \text{ so that } y<x\}.$ Similarly, let $B = \{y \in d_j|\exists x \in d_i \text{ so that } y<x\}.$ Then, take the elements from $A,$ and move them to $d_j,$ and take the elements from $A,$ and move them to $d_i.$ Call these new anti-chains $d_i', d_j'.$
\par First, note that the new $d_i$ and $d_j$ are both anti-chains. Suppose for the sake of contradiction this wasn't the case; then, since $d_i, d_j$ were anti-chains, the relations that occur afterwards must have one element in one of the sets $A, B$ and the other not (since, by anti-chain, all the elements in $A$ are pairwise incomparable, and similarly for $B$). This yields four cases:
\begin{enumerate}
    \item If there exists an $a \in d_i', b \in B$ so that $b  < a,$ then $a \in d_i'$ means that $a \not \in A.$ But $a \not \in B,$ so $a \in d_i,$ and $a \in A,$ contradiction.
    \item If there exists an $a \in d_i', b \in B$ so $a < b,$ then there exists an element $x$ in $d_i$ so that $b<x,$ so then $a < x.$ But $a \not \in B,$ so $a \in d_i,$ contradicting anti-chain.
    \item If there exists an $a \in A, b \in d_j
    '$ so that $b < a,$ then notice that $b \in d_j'$ means that $b \not \in B.$ But $a \in A \subseteq d_i,$ meaning that $b \in B,$ contradiction. 
    \item If there exists an $a \in A, b \in d_j$ so $a < b,$ then there exists a $y \in d_j$ so that $y < a < b,$ or $y < b.$ But $b \not \in A,$ so thus $b \in d_j,$ contradicting anti-chain.
\end{enumerate}
\par Therefore, we end up still with anti-chains, the sum of whose lengths is the same.
\par Furthermore, notice that the result we get is an element that is lexicographically earlier; let $x$ be so that $h(x)$ is minimal, among all elements of $A, B.$ Then, notice that, by construction, $x \in B,$ otherwise we see that there is a $y \in d_j$ so that $h(y) < h(x),$ meaning that $y \in B$ as $x \in A \subseteq d_i,$ contradicting minimality. Then, notice that this moves from the list of $j$s to the list of $i$s, and by construction no other elements are moved other than those in $A$ or $B.$ But $i < j$ means that this means it is lexicographically earlier.
\par Since we only have a finite number of these tuples, we can only apply this process a finite number of times before we end up with a result where, for any $i, j,$ the resulting $A, B$ are empty. But if $A, B$ are empty, notice then that these anti-chains are all not semi-overlapping, since the semi-overlapping condition for $d_i, d_j$ here requires that, for $i < j,$ that there exists $x \in d_i, y \in d_j$ so $y < x,$ or that the resulting $A, B$ aren't empty.
\par Therefore, we see that we can re-arrange the anti-chains in a way so that they are not semi-overlapping, so $D'_p \geq D_p \geq D'_p,$ and these are equal.
\par But this means that the conjugate partitions in this theorem are precisely those given in Theorem 2.1, as desired.
\end{proof}
\par Note that Example 2.1 yields a case that doesn't fall under either of these corollaries. In particular, we can view Corollary 7.1, the localized Greene's theorem, as being the case when $C_P$ is as large as possible, and poset $P$ is just $\{1, 2, \ldots, n\}.$ On the other hand, Corollary 7.2 occurs when $C_P$ is as small as possible, and $h$ is a linear extension.
\printbibliography

@misc{britz1999finite,
    title={Finite Posets and Ferrers Shapes},
    author={Thomas Britz and Sergey Fomin},
    year={1999},
    eprint={math/9912126},
    archivePrefix={arXiv},
    primaryClass={math.CO}
}

@book{williamson_2019, place={Cambridge}, title={Network Flow Algorithms}, DOI={10.1017/9781316888568}, publisher={Cambridge University Press}, author={Williamson, David P.}, year={2019}}

@online{Lewis2019, title = {A localized version of Greene's theorem}, url = {https://realopacblog.wordpress.com/2019/11/24/a-localized-version-of-greenes-theorem/}, author = {Joel Lewis}}

@misc{lewis2020scaling,
      title={Scaling limit of soliton lengths in a multicolor box-ball system}, 
      author={Joel Lewis and Hanbaek Lyu and Pavlo Pylyavskyy and Arnab Sen},
      year={2020},
      eprint={1911.04458},
      archivePrefix={arXiv},
      primaryClass={math.PR}
}

@article{GREENE197669,
title = "Some partitions associated with a partially ordered set",
journal = "Journal of Combinatorial Theory, Series A",
volume = "20",
number = "1",
pages = "69 - 79",
year = "1976",
issn = "0097-3165",
doi = "https://doi.org/10.1016/0097-3165(76)90078-9",
url = "http://www.sciencedirect.com/science/article/pii/0097316576900789",
author = "Curtis Greene",
abstract = "For any partially ordered set P, let dk(P)(d̂k(P)) denote the cardinality of the largest subset of P obtained by taking the union of k antichains (chains). Then there exists a partition Δ = {Δl ⩾ Δ2 > … ⩾ Δl} of | P | such that dk(P) = Δ1 + Δ2 + … + Δk and d̂k(P) = Δ1∗ + Δ2∗ + … + Δk∗ for each k, where Δ∗ denotes the partition conjugate to Δ. This result can be used to prove a general class of “Dilworth-type” theorems for subfamilies of P."
}
\typeout{get arXiv to do 4 passes: Label(s) may have changed. Rerun}
\end{document}